\newcommand{\ZZ}{\mathbb Z}
\newcommand{\CC}{\mathbb C}
\theoremstyle{plain}
\newtheorem{theorem}{Theorem}
\newtheorem{prop}[theorem]{Proposition}
\newtheorem{cor}[theorem]{Corollary}
\newtheorem{lemma}[theorem]{Lemma}
\theoremstyle{definition}
\newtheorem{definition}[theorem]{Definition}
\newtheorem{notation}[theorem]{Notation}
\newtheorem{example}[theorem]{Example}
\newtheorem{rem}[theorem]{Remark}
\numberwithin{theorem}{section}
\numberwithin{equation}{section}
\title{Syzygies of Some Invariant Rings with Cyclic Group Actions}
\author[]{Christin Sum}
\newcommand{\inva}{\text{inv}}
\newcommand{\invab}{\text{inv}_{1,b}}
\newcommand{\inv}{^{-1}}
\newcommand{\sg}{S^G_{1,b}}
\newcommand{\x}{x_1}
\newcommand{\xx}{x_2}
\newcommand{\varphiab}{\varphi_{a,b}}
\newcommand{\varphib}{\varphi_{1,b}}
\begin{document}
\begin{abstract}
    We investigate actions of cyclic groups on polynomial rings
    with two variables, and the minimal free resolution of the
    corresponding invariant ring. 
    In particular, we fully classify several cases, including the case the defining ideal has codimension two, and when the multidegrees of generating invariants lie on the union of two lines.
\end{abstract}

\maketitle

\section{Introduction}

The late nineteenth century witnessed breakthroughs from Hilbert \cite{ueberformen} \cite{ueberinvariant} in invariant theory. 
Notably, Hilbert's Basis Theorem solved the fundamental problem of invariant theory concerning the finitely generated nature of invariant rings. 
Within the same era, Hilbert's Syzygy Theorem proved that the free resolution of a finitely generated module over a polynomial ring in $n$ variables has length at most $n$.

In recent decades, research in invariant theory has broadened to other branches of mathematics, some of which include geometric invariant theory \cite{mumford}, computational invariant theory \cite{sturmfels}, and connections between invariant theory and combinatorics \cite{stanley}. 
Stanley in particular examined invariants of finite groups and their applications to combinatorics.

This paper investigates the invariant ring of cyclic groups acting on a polynomial ring with two variables and aims to explicitly construct its minimal free resolution. 
Our main result provides the free resolution when the invariant ring has four generators, and determines the conditions that yield such an invariant ring. 
We then expand to five generators, and end in the case where the invariant ring has a simple structure in a different sense. 

Additionally, this work falls in line with the recent surge of activity in the study of syzygies in the nonstandard graded setting. Examples of such studies include 
Benson's \cite{benson} paper that generalizes Castelnuovo-Mumford regularity to nonstandard $\ZZ$-graded polynomial rings, Symonds's \cite{symonds10},\cite{symonds11} resulting work in invariant theory, and more
\cite{brown2022tateresolutionstoricvarieties},\cite{brown2023linearsyzygiescurvesweighted},\cite{brownerm24},\cite{davis2024rationalnormalcurvesweighted}.

\medskip

Throughout this paper, let $S = \CC[x_1,x_2]$ and $G= \ZZ/p\ZZ = \langle \zeta = e^{2\pi i/p} \rangle$ where $p$ is prime. 
Let $0 < a,b < p$ be integers that define an action of $G$ on $S$ via
\begin{align*}
    x_1 &\mapsto \zeta^a x_1
    \\
    x_2 &\mapsto \zeta^bx_2
\end{align*}
This yields a finite set $\{z_0, z_1, \dots, z_n\}$ of generators of the invariant ring $S^G_{a,b}$ as a $\CC$-algebra, which we will write as $inv_{a,b}$ and arrange in lexicographic order.
We then set $R = \CC[y_0, y_1, \dots, y_n]$ with $\deg y_i = \deg z_i$, along with the map
\begin{align*}
    \varphiab : R &\rightarrow S^G_{a,b}
    \\
    y_i &\mapsto z_i
\end{align*}
so that $S^G_{a,b} \cong R / \ker \varphiab$.
Within these settings, our first main result explores the case when 
$S^G_{a,b}$ is generated by precisely four invariants. 
Before stating this result, let us introduce some relevant notation.

\begin{notation}
    For an integer $c$, we will write $c_p$ for the unique integer $0 \leq c_p < p$ such that $c$ is congruent to $c_p \text{ mod } p$.
    Additionally, we will write $c \inv$ for the unique integer $0 < c^{-1} < p$ such that $cc^{-1}$ is congruent to $1 \text{ mod } p$.
    We will also often write $\equiv_p$ for congruence modulo $p$.
\end{notation}

\begin{theorem}\label{thm1}
    With notation as above, the following are equivalent:
    \begin{enumerate}
        \item $S^G_{a,b}$ is generated by 4 invariants (i.e. codim $\ker \varphiab =2$).
        \item $(p-(a\inv b)_p)(p-(ab\inv)_p) = p+1.$
    \end{enumerate}
\end{theorem}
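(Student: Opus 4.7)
The plan is to reduce the codimension-$2$ condition to a count of Pareto-minimal lattice points and then match that count against the stated arithmetic identity. Set $\alpha=(a\inv b)_p$, $\beta=(ab\inv)_p$, and $\alpha'=p-\alpha$, $\beta'=p-\beta$, so that $\alpha\beta\equiv\alpha'\beta'\equiv 1\pmod p$. A monomial $x_1^ix_2^j$ is invariant iff $i\equiv\alpha' j\pmod p$, so for $j\in\{1,\ldots,p-1\}$ the smallest nonnegative invariant exponent in column $j$ is $f(j):=\alpha' j\bmod p\in\{1,\ldots,p-1\}$, while the only minimal points of $L=\{(i,j)\in\NN^2:ai+bj\equiv 0\pmod p\}$ on the coordinate axes are $(p,0)$ and $(0,p)$.

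The first and main step is to show that the minimal monoid generators of $L$ are exactly $(p,0)$, $(0,p)$, together with those $(f(j),j)$ for which $f(j')>f(j)$ for every $1\leq j'<j$ — equivalently, the left-to-right minima of the sequence $f(1),\ldots,f(p-1)$. For the forward direction, any decomposition $(f(j),j)=(i_1,j_1)+(i_2,j_2)$ in $L$ with nonzero summands must have $j_1,j_2\geq 1$ (else a summand $(i,0)\in L$ would force $i\geq p$, incompatible with $f(j)<p$); then $i_k\equiv f(j_k)\pmod p$ and $i_1+i_2<p$ force $i_k=f(j_k)$ and $f(j_1)+f(j_2)=f(j)$, so $f(j_1)<f(j)$ with $j_1<j$. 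Conversely, if $f(j')<f(j)$ with $j'<j$, setting $j''=j-j'$ the congruence $f(j')+f(j'')\equiv f(j)\pmod p$ combined with $0<f(j)-f(j')<p$ forces $f(j'')=f(j)-f(j')$, giving a genuine decomposition.

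Thus the number of minimal generators equals $2+\#\{\text{left-to-right minima of }f(1),\ldots,f(p-1)\}$, and having four generators means exactly two such minima. Since $f(1)=\alpha'$ and $f(\beta')=\alpha'\beta'\bmod p=1$, both $j=1$ and $j=\beta'$ are always left-to-right minima whenever $\beta'\geq 2$; so the condition is that no $j\in\{2,\ldots,\beta'-1\}$ satisfies $f(j)<\alpha'$. Because $f$ agrees with $j\mapsto\alpha' j$ up to the first wraparound modulo $p$, the smallest such $j$ is $j^*=\lceil p/\alpha'\rceil$, and we need $j^*\geq\beta'$, i.e., $\alpha'(\beta'-1)<p$, i.e., $\alpha'\beta'<p+\alpha'$. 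Writing $\alpha'\beta'=kp+1$ (allowed by $\alpha'\beta'\equiv 1\pmod p$) and using $\alpha'\leq p-1$, this inequality forces $k=1$, i.e., $\alpha'\beta'=p+1$. The degenerate case $k=0$ forces $\alpha'=\beta'=1$ and gives only three generators, so is correctly excluded. Expanding $\alpha'\beta'=p+1$ yields the identity in the theorem statement.

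The main obstacle is the Pareto-to-generator dictionary in the second paragraph. The congruence $f(j_1)+f(j_2)\equiv f(j)\pmod p$ admits two integer realizations, $f(j_1)+f(j_2)=f(j)$ (a genuine decomposition in $L$) and $f(j_1)+f(j_2)=f(j)+p$ (which only encodes a syzygy among generators); separating these, together with ruling out $(p,0)$ and $(0,p)$ as summands in any column-reducing decomposition, is the technical heart of the proof. Once this is in place, the count reduces to the elementary modular arithmetic described above.
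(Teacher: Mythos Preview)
Your argument is correct and takes a genuinely different route from the paper. The paper first reduces to $a=1$ and $b\le b\inv$, then assembles a chain of structural lemmas about $\invab$: Lemma~\ref{inv-divalg} exhibits explicit monomials that always lie in the generating set, Lemma~\ref{more than 4 inv} disposes of the range $1<b<\tfrac{p-1}{2}$, and the key Proposition~\ref{pk+1 more than 4 invs} constructs by hand an extra invariant $x_1^{b+1}x_2^{p-b\inv-1}$ whenever $(p-b)(p-b\inv)=kp+1$ with $k>1$. You replace all of this case analysis with a single combinatorial reformulation: the off-axis monoid generators are exactly the left-to-right minima of the sequence $j\mapsto\alpha' j\bmod p$, and since the first wraparound occurs at $j^*=\lceil p/\alpha'\rceil$, exactly two minima survive iff $j^*\ge\beta'$, which unwinds directly to $\alpha'\beta'=p+1$. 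This is cleaner and more uniform---no separate treatment of $b=1$, $b=\tfrac{p-1}{2}$, $b=p-1$, or the split at $\tfrac{p-1}{2}$---and the left-to-right-minima framework would adapt readily to counting five or more generators. The paper's more hands-on approach, by contrast, produces the explicit form of the generators along the way (Lemma~\ref{inv-divalg}), which it then reuses in \S\ref{section4}--\S\ref{section5} for the determinantal presentations; your argument establishes the count without naming the invariants, though of course the minima at $j=1$ and $j=\beta'$ recover them immediately as $x_1^{\alpha'}x_2$ and $x_1x_2^{\beta'}$.
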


In the context of Theorem \ref{thm1}, $S^{G}_{a,b}$ is a Cohen-Macaulay, codimension 2 algebra and hence its minimal free resolution can be understood via the Hilbert-Burch Theorem(\cite{eisenbudcommalg}, Theorem 20.15). 
Corollary \ref{4 inv res} provides an explicit description of this minimal free resolution in terms of the integers $p,a,b$.

Our next main result expands to the five invariant case, proving:

\begin{prop}\label{prop2}
    With notation as above,  if $(p-(a\inv b)_p)(p-(ab\inv)_p) = 2p+1$ then $S^G_{a,b}$ is generated by 5 invariants (i.e. codim $\ker \varphiab=3$). 
\end{prop}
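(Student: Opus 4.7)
The plan is to translate the statement into a question about the Hilbert basis of the lattice monoid $M = \{(i,j) \in \ZZ_{\geq 0}^2 : ai + bj \equiv_p 0\}$, since a minimal generating set of $S^G_{a,b}$ as a $\CC$-algebra corresponds bijectively to the Hilbert basis of $M$. Write $u = p - (a^{-1}b)_p$ and $v = p - (ab^{-1})_p$, so the hypothesis reads $uv = 2p+1$. Since $2p+1$ is odd, both $u$ and $v$ are odd, so I would introduce the candidate $m := \bigl(\tfrac{u+1}{2}, \tfrac{v+1}{2}\bigr) \in \ZZ^2$. A short congruence/parity argument, using $au+b \equiv_p 0$, $a+bv \equiv_p 0$, and $p$ odd, shows that $a(u+1) + b(v+1)$ is divisible by $2p$, hence $m \in M$. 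This produces five candidate generators, listed in decreasing first coordinate:
\begin{equation*}
(p,0), \; (u,1), \; m, \; (1,v), \; (0,p).
\end{equation*}

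The heart of the proof is the determinant identity
\begin{equation*}
\det\bigl((u,1), m\bigr) = \frac{u(v+1) - (u+1)}{2} = \frac{uv-1}{2} = p,
\end{equation*}
and by symmetry $\det(m,(1,v)) = p$; the outer pairs $\det((p,0),(u,1))$ and $\det((1,v),(0,p))$ also equal $p$. This is precisely where the hypothesis $uv = 2p+1$ enters. Since the sublattice $L = \{(i,j) \in \ZZ^2 : ai + bj \equiv_p 0\}$ has covolume $p$ in $\ZZ^2$, each consecutive pair of candidates is a $\ZZ$-basis of $L$, and so every $L$-lattice point in the closed cone spanned by a consecutive pair is a non-negative integer combination of the two candidates. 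The rays from the origin through the five candidates are in strictly increasing angular order (as $uv > 1$), so the four resulting sub-cones partition $\RR_{\geq 0}^2$, and consequently the five candidates generate all of $M$ as a monoid.

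It then remains to show that none of the five candidates is itself a sum $w_1 + w_2$ with $w_1, w_2 \in M \setminus \{0\}$. The four ``corner'' candidates are irreducible by a routine coordinate inspection. For $m$, any such decomposition would place $w_1$ in one of the four sub-cones; expanding $w_1$ in the $\ZZ$-basis of that sub-cone and imposing $w_1 \leq m$ coordinate-wise forces the expansion coefficients to be $0$, so $w_1 \in \{0, m\}$. I expect this irreducibility analysis for $m$ to be the main obstacle, but the determinant identity does most of the work by guaranteeing that no $L$-lattice point hides strictly between $(u,1)$ and $m$ (or between $m$ and $(1,v)$) on the Newton polygon. Combining generation and irreducibility, the Hilbert basis of $M$ has exactly five elements, so $S^G_{a,b}$ is minimally generated by five invariants and $\operatorname{codim} \ker \varphiab = 3$.
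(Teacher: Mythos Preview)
Your argument is correct and genuinely different from the paper's. The paper first reduces to $a=1$, then splits into two cases according to whether $b<\tfrac{p-1}{2}$ or $b>\tfrac{p-1}{2}$; in each case it identifies the fifth generator separately (as $x_1^{p-2b}x_2^2$, respectively $x_1^{(p-b+1)/2}x_2^{(p-b^{-1}+1)/2}$) and rules out further generators by ad hoc modular-arithmetic bounds on the exponents. Your approach is uniform: the single candidate $m=\bigl(\tfrac{u+1}{2},\tfrac{v+1}{2}\bigr)$ covers both of the paper's cases at once (one checks that for $b<\tfrac{p-1}{2}$, where necessarily $b=\tfrac{p-1}{3}$ and $v=3$, your $m$ specializes to $(p-2b,2)$), and the determinant identity $\det=p$ for each consecutive pair replaces the paper's separate inequality chases with a clean lattice-basis argument. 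What your route buys is conceptual clarity and no case split; what the paper's route buys is that it needs nothing beyond elementary divisibility, whereas yours invokes the fact that a pair of lattice vectors with determinant equal to the covolume form a $\ZZ$-basis. Your parity step is also slicker than you perhaps realize: since $u,v$ are odd, $a(u+1)+b(v+1)$ is visibly even, and it is $\equiv_p 0$ because $au+b\equiv_p 0$ and $a+bv\equiv_p 0$; as $p$ is odd this already gives divisibility by $2p$. The irreducibility of $m$, which you flag as the main obstacle, is indeed routine once you observe $u,v\geq 3$ (so $\tfrac{u+1}{2}<u$ and $\tfrac{v+1}{2}<v$), forcing the basis coefficients of any nonzero $w_1\leq m$ in each sub-cone to vanish.
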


Unlike Theorem \ref{thm1}, the converse of this proposition is not guaranteed (see Example \ref{ex2}). 
Moreover, in Propositions \ref{2p+1 lower ker} and \ref{2p+1 upper ker} we show that $\ker \varphiab$ is generated by the maximal minors of a matrix in the five invariant case, from which we conclude the free resolution of $S^G_{a,b}$ is given by an Eagon-Northcott complex.

We then veer from cases that consider the number of generating invariants to a particular case that determines when the multidegrees of generating invariants live on precisely two lines. 
We classify conditions on $p,a,b$ under which this case occurs. 
These resolutions are not necessarily presented by ideals of maximal minors of the expected codimension.

This paper is broken up as follows: In $\S$\ref{section2}, we provide examples relevant to Theorem \ref{thm1} and Proposition \ref{prop2}.
In $\S$\ref{section3}, we prove an equivalent statement of Theorem \ref{thm1} and in $\S$\ref{section4} we provide the corresponding free resolution. 
In $\S$\ref{section5}, we prove an equivalent statement of Proposition \ref{prop2}. 
We conclude in $\S$\ref{section6}, classifying when the multidegrees of invariants live on two lines.

\smallskip

\noindent \textbf{Acknowledgments.} I would like to thank my advisor, Daniel Erman, for all of his guidance and support. The majority of these results came from observations made while using Macaulay2 \cite{M2}.

\medskip

\section{Examples}\label{section2}

Before proving our main results, let us illustrate with some examples. 
First consider the following example that satisfies the conditions of Theorem \ref{thm1}.

\begin{example}\label{ex1}
    Let $G = \ZZ/7\ZZ$ and $a=1, b=3$.
    That is, $G = \langle \zeta = e^{2\pi i /7} \rangle$ acts on $S = \CC[x_1,x_2]$ via
    \begin{align*}
        x_1 &\mapsto \zeta x_1
        \\
        x_2 &\mapsto \zeta^3 x_2.
    \end{align*}
    It follows that $S^G_{1,3}$ is generated by $\inva_{1,3} = \{x_1^7, \; x_1^4x_2, \; x_1x_2^2, \; x_2^7\}$.
    Thus we set
    $R = \CC[y_0,y_1,y_2,y_3]$ with $\deg y_0 = 7, \deg y_1 = 5, \deg y_2 = 3, \deg y_3 = 7$, and the map
    \begin{align*}
    \varphi_{1,3} : R &\rightarrow S^G
    \\
    y_0 &\mapsto x_1^7
    \\
    y_1 &\mapsto x_1^4x_2
    \\
    y_2 &\mapsto x_1x_2^2
    \\
    y_3 &\mapsto x_2^7.
    \end{align*}
    In this case, $\ker \varphi_{1,3} = \langle y_1^2 - y_0y_2, \; y_2^4 - y_1y_3, \; y_1y_2^3 - y_0y_3 \rangle$ and the free resolution of $R^1/\ker \varphi_{1,3}$ is
    $$R^1
    \xlongleftarrow{[\ker \varphi_{1,3}]}
    \renewcommand*{\arraystretch}{1.1}
    \begin{tabular}{ c }
        $R^1(-10)$
        \\
        $\bigoplus$
        \\
        $R^1(-12)$
        \\
        $\bigoplus$
        \\
        $R^1(-14)$
    \end{tabular}
    \xlongleftarrow{
    \begin{bmatrix}
        -y_3 & -y_2^3
        \\
        -y_1 & -y_0
        \\
        y_2 & y_1
    \end{bmatrix}} 
    \renewcommand*{\arraystretch}{1.1}
    \begin{tabular}{ c }
        $R^1(-17)$
        \\
        $\bigoplus$
        \\
        $R^1(-19)$
    \end{tabular}
    $$
    Furthermore, since $p=7, \; a=1, \; b=3$, then $a\inv = 1, \; b\inv = 5$.
    Hence
    $$(p-(a^{-1}b)_p)(p-(ab^{-1})_p) = (7-3)(7-5) = 8 = p+1.$$
    That is, $(p-(a^{-1}b)_p)(p-(ab^{-1})_p) = p+1$ where
    $S^G_{1,3}$ is also generated by 4 invariants.
    So this example satisfies the conditions of Theorem \ref{thm1}.
\end{example}

Now observe the following example which does not satisfy any of the conditions of Theorem \ref{thm1} and also demonstrates the nonequivalence of Proposition \ref{prop2}.
\begin{example}\label{ex2}
    Let $G = \ZZ/13\ZZ$ and $a=7, b=9$.
    Thus $\inva_{7,9} = \{x_1^{13}, x_1^{8}x_2, x_1^3x_2^2, x_1x_2^{5}, x_2^{13}\}$ and $R = \CC[y_0, y_1,y_2,y_3,y_4]$ with $\deg y_0 = 13, \deg y_1 = 9, \deg y_2 = 5, \deg y_3 = 6, \deg y_4 = 13$. With this,
    $$\ker \varphi_{7,9} = \langle y_2^3-y_1y_3, y_3^3-y_2y_4, y_1^2-y_0y_2,y_1y_2^2-y_0y_3, y_2^2y_3^2-y_1y_4,y_1y_2y_3^2-y_0y_4\rangle,$$ 
    and the free resolution of $R^1/\ker \varphi_{7,9}$ is
    $$R^1
    \xlongleftarrow{[\ker \varphi_{7,9}]}
    \renewcommand*{\arraystretch}{1.1}
    \begin{tabular}{ c }
        $R^1(-15)$
        \\
        $\bigoplus$
        \\
        $R^2(-18)$
        \\
        $\bigoplus$
        \\
        $R^1(-19)$
        \\
        $\bigoplus$
        \\
        $R^1(-22)$
        \\
        $\bigoplus$
        \\
        $R^1(-26)$
    \end{tabular}
    \xlongleftarrow{
    d_1
    }
    \renewcommand*{\arraystretch}{1.1}
    \begin{tabular}{ c }
        $R^1(-24)$
        \\
        $\bigoplus$
        \\
        $R^1(-27)$
        \\
        $\bigoplus$
        \\
        $R^2(-28)$
        \\
        $\bigoplus$
        \\
        $R^2(-31)$
        \\
        $\bigoplus$
        \\
        $R^1(-32)$
        \\
        $\bigoplus$
        \\
        $R^1(-35)$
    \end{tabular}
    \xlongleftarrow{
    d_2
    }
    \renewcommand*{\arraystretch}{1.1}
    \begin{tabular}{ c }
        $R^1(-37)$
        \\
        $\bigoplus$
        \\
        $R^1(-40)$
        \\
        $\bigoplus$
        \\
        $R^1(-41)$
    \end{tabular}
    $$
    where 
    
    $d_1 = 
    \renewcommand*{\arraystretch}{0.9}
    \begin{bmatrix}
        -y_1 & -y_3^2 & -y_0 & -y_4 & 0 & 0 & 0 & 0 
        \\
        -y_3 & 0 & -y_2^2 & 0 & y_4 & -y_4 & 0 & -y_2y_3^2
        \\
        0 &-y_1&0&-y_2^2&-y_0 &  0 & -y_1y_2 & 0 
        \\
        y_2 & 0 & y_1 & 0 & -y_3^2 & 0 & -y_4 & 0 
        \\
        0   & y_2  &  0  & y_3 & y_1 & -y_1 & 0 & -y_0
        \\
        0 & 0 & 0 & 0 & 0 & y_2 & y_3 & y_1
    \end{bmatrix}$
    and $d_2 =
    \renewcommand*{\arraystretch}{0.9}
    \begin{bmatrix}
        y_4 & 0 & -y_2y_3^2
        \\
        0  & y_0 & y_1y_2
        \\
        0 & -y_3^2 & -y_4
        \\
        -y_1 & 0 & y_0
        \\
        0 & -y_1 & -y_2^2 
        \\
        -y_3 & -y_1 & 0
        \\
        y_2 & 0 & -y_1
        \\
        0 & y_2 & y_3
    \end{bmatrix}.$
    In this case, as $p=13, a=7, b=9$, then $a\inv = 2, b\inv = 3$.
    Hence
    $$(p-(a^{-1}b)_p)(p-(ab^{-1})_p) = (13-18_{13})(13-21_{13}) = (13-5)(13-8) = 40 = 3p+1.$$
    That is, $(p-(a^{-1}b)_p)(p-(ab^{-1})_p) = 3p+1$.
    Meanwhile $S^G_{7,9}$ is generated by 5 invariants, thus showing the converse of Proposition \ref{prop2} is not guaranteed.
\end{example}

We conclude with an example that satisfies Proposition \ref{prop2}.

\begin{example}\label{ex3}
    Let $G = \ZZ/13\ZZ$ and $a= 1,b=4$. Thus $\inva_{1,4} = \{x_1^{13}, x_1^9x_2, x_1^5x_2^2, x_1x_2^3, x_2^{13} \}$ and $R = \CC[y_0, y_1,y_2,y_3,y_4]$ with $\deg y_0 = 13, \deg y_1 = 10, \deg y_2 = 7, \deg y_3 = 4, \deg y_4 = 13$. Therefore
    $$\ker \varphi_{1,4} = \langle y_2^2 - y_1y_3, y_1y_2 - y_0y_3, y_3^5 - y_2y_4, y_1^2 - y_0y_2, y_2y_3^4 - y_1y_4, y_1y_3^4 - y_0y_4 \rangle,$$
    and the free resolution of $R^1 / \ker \varphi_{1,4}$ is
    $$R^1
    \xlongleftarrow{[\ker \varphi_{1,4}]}
    \renewcommand*{\arraystretch}{1.1}
    \begin{tabular}{ c }
        $R^1(-14)$
        \\
        $\bigoplus$
        \\
        $R^1(-17)$
        \\
        $\bigoplus$
        \\
        $R^2(-20)$
        \\
        $\bigoplus$
        \\
        $R^1(-23)$
        \\
        $\bigoplus$
        \\
        $R^1(-26)$
    \end{tabular}
    \xlongleftarrow{
    d_1
    }
    \renewcommand*{\arraystretch}{1.1}
    \begin{tabular}{ c }
        $R^1(-24)$
        \\
        $\bigoplus$
        \\
        $R^2(-27)$
        \\
        $\bigoplus$
        \\
        $R^2(-30)$
        \\
        $\bigoplus$
        \\
        $R^2(-33)$
        \\
        $\bigoplus$
        \\
        $R^1(-36)$
    \end{tabular}
    \xlongleftarrow{
    d_2
    }
    \renewcommand*{\arraystretch}{1.1}
    \begin{tabular}{ c }
        $R^1(-37)$
        \\
        $\bigoplus$
        \\
        $R^1(-40)$
        \\
        $\bigoplus$
        \\
        $R^1(-43)$
    \end{tabular}
    $$
    where
    \\
    $d_1 = 
    \renewcommand*{\arraystretch}{0.9}
    \begin{bmatrix}
        -y_1 & y_0 & -y_4 & -y_3^4 & 0 & 0 & 0 & 0 
        \\
        y_2 & -y_1 & 0 & y_4 & -y_4 & -y_3^4 & 0 & 0
        \\
        -y_3 & y_2 & 0 & 0 & 0 & y_4 & -y_4 & -y_3^4
        \\
        0 & 0 & -y_2 & 0 & -y_1 & -y_0 & 0 & 0
        \\
        0   & 0  &  y_3 & y_2 & 0 & y_1 & -y_1 & -y_0
        \\
        0 & 0 & 0 & -y_3 & y_3 & 0 & y_2 & y_1
    \end{bmatrix}$
    and $d_2 =
    \renewcommand*{\arraystretch}{0.9}
    \begin{bmatrix}
        -y_4 & y_3^4 & 0 
        \\
        0  & -y_4 & y_3^4
        \\
        y_1 & -y_0 & 0   
        \\
        0   & -y_1 & y_0 
        \\
        -y_2 & 0   &  y_0 
        \\
        0  &  y_2 &  -y_1  
        \\
        y_3  & 0   &  -y_1 
        \\
        0   & -y_3  & y_2 
    \end{bmatrix}.$
    Furthermore, since $p=13, a=1,b=4$, then $a\inv = 1, b\inv = 10$.
    Thus
    $$(p-(a^{-1}b)_p)(p-(ab^{-1})_p) = (13-4)(13-10) = 27 = 2p+1.$$
    That is, $(p-(a^{-1}b)_p)(p-(ab^{-1})_p) = 2p+1$ where
    $S^G_{1,4}$ is also generated by 5 invariants. 
    So this example satisfies Proposition \ref{prop2}.
    
    Moreover, note that the multidegrees of the generating invariants are $$\{(13,0),(9,1),(5,2),(1,3),(0,13)\}$$
    with consecutive slopes $-\frac{1}{4}$ and $-10$, as depicted in the following graph: 
    \begin{center}
        \includegraphics[scale=0.21]{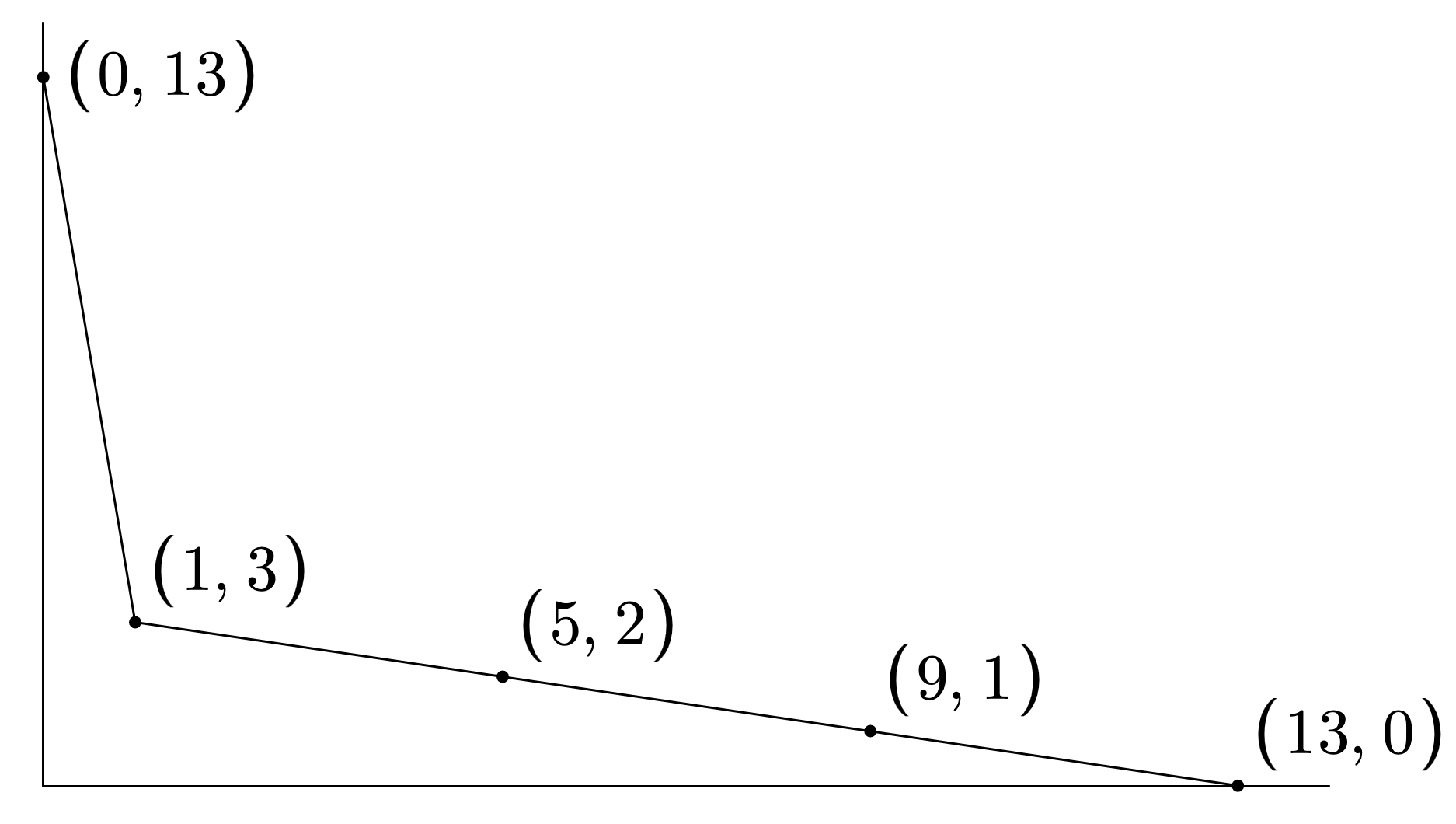}
    \end{center}
    The multidegrees of the invariants thus lie on two lines, and we will elaborate more on this in $\S$\ref{section6}.
\end{example}


\medskip

\section{Proof of Theorem 1.2}\label{section3}
To prove Theorem \ref{thm1}, we will introduce some propositions that allow us to prove an equivalent statement of our theorem.

\begin{prop}
      $\inva_{a,b} = \inva_{1,a\inv b}$.
\end{prop}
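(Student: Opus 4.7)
The plan is to reduce the proposition to a congruence statement by observing that both invariant rings are generated by monomials, and then show that the two congruence conditions picking out those monomials are equivalent because $a$ is invertible modulo $p$.

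First I would note that each action in question is diagonal, so every $\CC$-algebra automorphism coming from $g \in G$ preserves the monomial $\ZZ^2$-grading on $S = \CC[x_1,x_2]$. Consequently, $S^G_{a,b}$ is the $\CC$-span of the invariant monomials, and the same holds for $S^G_{1,a^{-1}b}$. Moreover, the set of invariant exponent vectors is a subsemigroup of $\NN^2$, and a monomial set of $\CC$-algebra generators is obtained by taking the (unique) minimal generating set of this semigroup. Thus it suffices to show that the semigroups of invariant exponent vectors coincide for the two actions.

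Next I would write down those semigroups explicitly. The monomial $x_1^i x_2^j$ transforms as $x_1^i x_2^j \mapsto \zeta^{ai+bj} x_1^i x_2^j$ under the $(a,b)$-action, so it is invariant iff $ai + bj \equiv_p 0$. Similarly, $x_1^i x_2^j$ is invariant under the $(1, a^{-1}b)$-action iff $i + (a^{-1}b)j \equiv_p 0$. Since $0 < a < p$ and $p$ is prime, $a$ is a unit mod $p$; multiplying the second congruence by $a$ recovers the first, and multiplying the first by $a^{-1}$ recovers the second. So the two congruence conditions cut out the same subset of $\NN^2$.

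Because the semigroups agree, so do their unique minimal generating sets, and hence the resulting monomial generating sets of the invariant rings are identical: $\text{inv}_{a,b} = \text{inv}_{1, a^{-1}b}$. There is essentially no obstacle here; the only thing to be careful about is noting that "the generators" really means the canonical minimal monomial generating set of a pointed affine semigroup, which is unique and hence a well-defined invariant of the congruence condition itself.
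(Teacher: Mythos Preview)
Your proof is correct and follows essentially the same route as the paper: both reduce to the equivalence of the congruences $ai+bj\equiv_p 0$ and $i+(a^{-1}b)j\equiv_p 0$, using that $a$ is a unit mod $p$, to conclude that the two invariant rings coincide as subrings of $S$. You are slightly more explicit than the paper in justifying why equal invariant rings force equal generating sets (via uniqueness of the minimal generating set of a pointed affine semigroup), whereas the paper simply asserts this; but the underlying argument is the same.
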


\begin{proof} 
    Since $\inva_{a,b}$ and $\inva_{1,a\inv b}$ are the respective generating sets of $S^G_{a,b}$ and $S^G_{1,a\inv b}$, it suffices to prove $S^G_{a,b} = S^G_{1,a\inv b}$. Consider an arbitrary monomial $x_1^cx_2^d$:
    \begin{align*}
        x_1^cx_2^d \in S^G_{a,b}
        &\Leftrightarrow
        (\zeta^a x_1)^c (\zeta^b x_2)^d = x_1^cx_2^d
        \\
        &\Leftrightarrow
        \zeta^{ac+bd}x_1^cx_2^d = x_1^cx_2^d
        \\
        &\Leftrightarrow
        ac+bd \equiv_p 0 
        \\
        &\Leftrightarrow
        c + a\inv bd \equiv_p 0 
        \\
        &\Leftrightarrow
        \zeta^{c+a\inv bd}x_1^cx_2^d = x_1^cx_2^d
        \\
        &\Leftrightarrow
        (\zeta x_1)^c (\zeta^{a\inv b}x_2)^d = x_1^cx_2^d
        \\
        &\Leftrightarrow
        x_1^cx_2^d \in S^G_{1,a\inv b}.
    \end{align*}
    As the monomials in $S^G_{a,b}$ are equal to those in $S^G_{1,a\inv b}$, then the invariant rings are equal and thus their generating sets are likewise equal.
\end{proof}

\begin{rem}
    As a result of this proposition, we only need to consider actions 
    \begin{align*}
        x_1 &\mapsto \zeta^a x_1
        \\
        x_2 &\mapsto \zeta^b x_2
    \end{align*}
   where $a=1$ and $0 < b < p$.
\end{rem}
\begin{prop}
    There is a natural bijection between $\inva_{1,b}$ and $\inva_{1,b\inv}$ induced by transposing $x_1$ and $x_2$, and this induces a ring isomorphism between $S^G_{1,b}$ and $S^G_{1,b^{-1}}$.
\end{prop}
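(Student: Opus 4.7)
The plan is to show that the swap automorphism of the ambient polynomial ring is exactly the isomorphism we want. Define the $\CC$-algebra automorphism $\sigma \colon S \to S$ by $\sigma(x_1) = x_2$, $\sigma(x_2) = x_1$. Since $\sigma$ is its own inverse and respects total degree, it suffices to check that $\sigma$ carries $S^G_{1,b}$ bijectively onto $S^G_{1,b\inv}$; the ring isomorphism claim then follows immediately from the fact that $\sigma$ is already a ring homomorphism on $S$.

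To establish that $\sigma(S^G_{1,b}) = S^G_{1,b\inv}$, I would work at the level of monomials, exactly as in the previous proposition. Since both invariant rings are spanned over $\CC$ by their invariant monomials and $\sigma$ permutes monomials, it suffices to check the correspondence on monomials. By the same computation as before, $x_1^c x_2^d \in S^G_{1,b}$ iff $c + bd \equiv_p 0$. Applying $\sigma$ gives $x_1^d x_2^c$, and $x_1^d x_2^c \in S^G_{1,b\inv}$ iff $d + b\inv c \equiv_p 0$, i.e., $bd + c \equiv_p 0$ after multiplying by $b$. These two congruences coincide, so $\sigma$ sends $S^G_{1,b}$ into $S^G_{1,b\inv}$; the reverse inclusion follows from applying the same argument to $\sigma\inv = \sigma$ (note that $(b\inv)\inv = b$).

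For the statement about the generators, I would use that $\inva_{1,b}$ and $\inva_{1,b\inv}$ are the (unique) minimal monomial generating sets of the two invariant subrings. Since $\sigma$ is a graded $\CC$-algebra isomorphism between the two rings that sends monomials to monomials, it must carry the minimal monomial generating set of the source to the minimal monomial generating set of the target. Concretely, $\sigma(x_1^c x_2^d) = x_1^d x_2^c$, so the bijection is given explicitly by swapping the two exponents of each generating monomial.

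I do not expect a substantive obstacle here: the content of the statement is that the symmetric roles of $x_1$ and $x_2$ translate, under the normalization $a = 1$, into interchanging $b$ and $b\inv$, and the proof is a one-line congruence check once the swap map is introduced. The only point requiring a little care is invoking uniqueness of the minimal monomial generating set so as to identify $\sigma(\inva_{1,b})$ with $\inva_{1,b\inv}$ rather than merely with some generating set of $S^G_{1,b\inv}$.
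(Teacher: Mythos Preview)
Your proposal is correct and follows essentially the same approach as the paper: both arguments reduce to the congruence check $c + bd \equiv_p 0 \iff d + b\inv c \equiv_p 0$ on monomials to show that swapping $x_1$ and $x_2$ interchanges $S^G_{1,b}$ and $S^G_{1,b\inv}$. If anything, you are slightly more explicit than the paper in packaging the swap as an ambient automorphism $\sigma$ of $S$ and in invoking uniqueness of the minimal monomial generating set to pin down the bijection $\inva_{1,b} \leftrightarrow \inva_{1,b\inv}$, which the paper leaves implicit.
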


\begin{proof}
    Consider an arbitrary monomial $x_1^cx_2^d$:
    \begin{align*}
        x_1^cx_2^d \in \sg
        &\Leftrightarrow
        (\zeta \x)^c(\zeta^b\xx)^d = \x^c\xx^d
        \\
        &\Leftrightarrow
        \zeta^{c+bd}x_1^cx_2^d = x_1^cx_2^d
        \\
        &\Leftrightarrow
        c+bd \equiv_p 0 
        \\
        &\Leftrightarrow
        b\inv c + d \equiv_p 0 
        \\
        &\Leftrightarrow
        \zeta^{d+b\inv c}x_1^dx_2^c = x_1^dx_2^c
        \\
        &\Leftrightarrow
        (\zeta \x)^d(\zeta^{b\inv}\xx)^c = \x^d\xx^c
        \\
        &\Leftrightarrow
        x_1^dx_2^c \in S^G_{1,b\inv}.
    \end{align*}
    Hence $S^G_{1,b} \cong S^G_{1,b\inv}$ follows.
\end{proof}

\begin{rem}
    Thus we further only need to consider actions 
    \begin{align*}
        x_1 &\mapsto \zeta x_1
        \\
        x_2 &\mapsto \zeta^b x_2
    \end{align*}
    where $0 < b \leq b\inv < p$.
\end{rem}

Hence we have reduced Theorem \ref{thm1} to proving the following theorem:
\begin{theorem}\label{thm2}
    Let $G = \ZZ/p \ZZ = \langle \zeta = e^{2\pi i/p} \rangle$ act on $S = \CC[x_1,x_2]$ via
    \begin{align*}
        x_1 &\mapsto \zeta x_1
        \\
        x_2 &\mapsto \zeta^b x_2
    \end{align*}
    where $0 < b  \leq b\inv < p$.
    The following are equivalent:
    \begin{enumerate}
        \item $S^G_{1,b}$ is generated by 4 invariants (i.e. codim $\ker \varphiab =2$).
        \item $(p-b)(p-b\inv) = p+1.$
    \end{enumerate}
\end{theorem}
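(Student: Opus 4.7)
The plan is to pass to the semigroup $L \cap \NN^2$, where $L = \{(c,d) \in \ZZ^2 : c + bd \equiv_p 0\}$; its minimal generators correspond bijectively to the generating invariants of $S^G_{1,b}$. Write $A = p - b$ and $B = p - b\inv$. A short direct check shows that $(p,0)$, $(A,1)$, $(1,B)$, and $(0,p)$ always lie in $L \cap \NN^2$ and are always indecomposable: for instance, a putative decomposition $(1,B) = (c_1,d_1) + (c_2,d_2)$ with both summands nontrivial would force $c_1+c_2 \geq 2$ (since any nonzero element has $c \geq 1$ when $d \geq 1$, or $c \geq p$ when $d = 0$), contradicting $c_1 + c_2 = 1$. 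Thus $S^G_{1,b}$ has at least four generators (assuming $B \geq 2$), with equality exactly when these four elements generate the full semigroup.

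For the direction $(p-b)(p-b\inv) = p+1 \Rightarrow$ four generators, assume $AB = p+1$. The key step is the arithmetic identity
\[\lfloor d/B \rfloor + \lfloor bd/p \rfloor = d-1 \qquad (1 \leq d \leq p).\]
I would prove this by writing $d = mB + s$ with $0 \leq s < B$, using the consequence $bB = p(B-1) - 1$ of $AB = p+1$ to reduce the claim to the inequality $m \in (-sA,\, p - sA]$; the upper bound follows from $A(s+1) \leq AB = p+1$ together with $m \leq A-1$, while the lower bound is immediate. This identity rearranges to the explicit decomposition $(p - (bd)_p,\, d) = s\cdot(A,1) + m\cdot(1,B)$, so every row-minimum of $L \cap \NN^2$ lies in the semigroup generated by the four candidates, and then adding multiples of $(p,0)$ and $(0,p)$ reaches every element of $L \cap \NN^2$.

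For the converse, assume $S^G_{1,b}$ has exactly four generators, so the four elements above generate. When $B \geq 3$, apply this to the interior row-minimum $(b+1,\, B-1) \in L \cap \NN^2$: matching $d$-coordinates (using $B-1 < B \leq p$) forces the $(1,B)$- and $(0,p)$-coefficients to vanish and the $(A,1)$-coefficient to equal $B - 1$, after which the $c$-equation gives $\alpha p = p + 1 - AB$ for the remaining coefficient $\alpha \geq 0$ of $(p,0)$. Together with $AB \equiv_p 1$, this forces $AB \in \{1, p+1\}$, and $AB = 1$ is incompatible with $B \geq 3$. The boundary cases are immediate: $B = 2$ forces $b = (p-1)/2$ and $A = (p+1)/2$, so $AB = p+1$ holds automatically, while $B = 1$ collapses to only three generators and is ruled out by hypothesis. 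The main obstacle is the floor identity; once it is in hand, both directions follow with only routine bookkeeping.
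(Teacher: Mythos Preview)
Your argument is correct. The converse direction $(1)\Rightarrow(2)$ is essentially the paper's: the paper (in its Proposition~3.13) isolates the very same test element $x_1^{b+1}x_2^{p-b^{-1}-1}$, i.e.\ your $(b+1,B-1)$, and shows that when $AB=pk+1$ with $k>1$ it cannot be written in terms of the four basic invariants; you phrase the contrapositive, deducing $AB\le p+1$ from the existence of a decomposition. The boundary analysis ($B=1,2$) also matches.

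Where you genuinely diverge is in $(2)\Rightarrow(1)$. The paper argues by contradiction on a hypothetical fifth minimal generator $x_1^c x_2^d$: the constraints $1<c<A$, $1<d<B$ together with $c\equiv_p Ad$ and the bound $Ad<AB=p+1$ force $c=Ad\ge A$, a contradiction in two lines. Your route is constructive: the floor identity $\lfloor d/B\rfloor+\lfloor bd/p\rfloor=d-1$ produces, for each row $1\le d\le p-1$, an explicit expression of the row minimum as a nonnegative combination $s\cdot(A,1)+m\cdot(1,B)$. This is longer but gives more: it exhibits the semigroup structure directly and would, for instance, immediately yield the normal form underlying the Gr\"obner/bijection argument in the paper's Proposition~4.2. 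Two small wording issues worth tightening: your claim ``any nonzero element has $c\ge 1$ when $d\ge 1$'' is literally false (take $(0,p)$), though in context the summands have $d<B<p$ so the conclusion stands; and calling $(b+1,B-1)$ an ``interior'' row minimum is imprecise when $b\ge (p-1)/2$, but the decomposition argument does not actually use that adjective.
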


We continue with lemmas regarding the generating set $\invab$ of $\sg$.

\begin{lemma}\label{invariant-lemmas}
    Let $x_1^cx_2^d \in \invab$.
    \begin{enumerate}
        \item $c,d \leq p$.
        \item If $x_1^ex_2^f \in S^G_{1,b}$ and $c \geq e$ then $d \leq f$.
        \item If $x_1^ex_2^f \in S^G_{1,b}$ and $d \geq f$ then $c \leq e$.
        \item If $x_1^ex_2^f \in \invab$ and $c=e$ or $d=f$ then $x_1^cx_2^d = x_1^ex_2^f$.
    \end{enumerate}
\end{lemma}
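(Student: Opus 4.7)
The plan is to exploit a single principle throughout: a non-constant invariant monomial lies in $\invab$ exactly when it is indecomposable as a product of two non-constant invariant monomials, and the invariance criterion $c+bd\equiv_p 0$ is closed under componentwise addition and under non-negative componentwise subtraction. Each part then reduces to exhibiting a forbidden factorization whenever the stated conclusion fails.

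For (1), I would argue that if $c\ge p+1$ then $x_1^p$ is a non-constant invariant and, subtracting $p\equiv_p 0$ from $c+bd\equiv_p 0$, so is $x_1^{c-p}x_2^d$; the identity $x_1^cx_2^d=x_1^p\cdot x_1^{c-p}x_2^d$ decomposes $x_1^cx_2^d$ nontrivially into non-constant invariants, contradicting membership in $\invab$. The bound $d\le p$ follows by the symmetric argument using $x_2^p$.

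For (2), assume $c\ge e$ and suppose for contradiction that $d>f$. Subtracting the congruences for $(c,d)$ and $(e,f)$ gives $(c-e)+b(d-f)\equiv_p 0$, so $x_1^{c-e}x_2^{d-f}$ is invariant, and non-constant because $d-f>0$. Provided $x_1^ex_2^f$ is itself non-constant (the only case in which the assertion is nontrivial), the factorization $x_1^cx_2^d=(x_1^ex_2^f)(x_1^{c-e}x_2^{d-f})$ contradicts $x_1^cx_2^d\in\invab$. Part (3) is the mirror image, interchanging the roles of the two exponents.

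For (4), with $x_1^cx_2^d,\,x_1^ex_2^f\in\invab$ and $c=e$, I would apply (2) twice --- once treating $x_1^cx_2^d$ as the generator and $x_1^ex_2^f$ as the invariant to obtain $d\le f$, and once with the roles reversed to obtain $f\le d$ --- and conclude $d=f$, so the two monomials coincide; the case $d=f$ is handled symmetrically via (3). I do not expect a genuine obstacle: the whole argument is an elementary exercise in the indecomposability of Hilbert-basis elements of the invariant monoid, and the only real point requiring care is cleanly distinguishing the constant invariant from the non-constant ones when invoking the factorization trick.
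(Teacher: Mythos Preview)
Your proof is correct and mirrors the paper's almost exactly: the same subtraction-of-congruences to manufacture an invariant cofactor, the same factorization contradiction for (1)--(3), and for (4) your explicit double application of (2) (resp.\ (3)) is precisely what the paper's terse ``follows from the previous parts'' encodes. One small quibble: your aside in (2) that the constant case $e=f=0$ is ``trivial'' is not quite right---the conclusion $d\le 0$ actually fails there (take $x_2^p\in\invab$)---but the paper's proof silently makes the same restriction to non-constant $x_1^ex_2^f$, and the lemma is only ever invoked that way.
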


\begin{proof}
    For (1), by way of contradiction suppose $c > p$. 
    Thus $c = p + r$ for some positive integer $r$. 
    Since $x_1^cx_2^d \in \invab$ then $0 \equiv_p c+bd \equiv_p (p+r) + bd \equiv_p r+ bd$. 
    Hence $x_1^rx_2^d \in \sg$. 
    So $\x^c \xx^d = \x^{p+r}\xx^d = (\x^p)(\x^r\xx^d)$ where $\x^p, \x^r\xx^d \in \sg$. 
    Thus $\x^p$ and $\x^r \xx^d$ divide $\x^c \xx^d$ in $\sg$, a contradiction to $\x^c \xx^d \in \invab$. 
    Therefore $c \leq p$ indeed holds.
    A similar argument proves $d \leq p$.

    For (2), suppose $x_1^ex_2^f \in \sg$ and $c \geq e$. By way of contradiction suppose $d > f$. 
    Thus $d = f + s$ for some integer $s > 0$. 
    Similarly since $c \geq e$ then $c = e + r$ for some integer $r \geq 0$. 
    Note that as $x_1^cx_2^d, x_1^ex_2^f \in \sg$, then by definition $c + bd \equiv_p 0 \equiv_p e + bf$. 
    This further yields
    \begin{align*}
        0 \equiv_p c+bd \equiv_p (e+r) + b(f+s) \equiv_p (e+bf) + (r+bs) \equiv_p r+bs.
    \end{align*}
    Hence $x_1^rx_2^s \in \sg$. 
    So $x_1^cx_2^d = x_1^{e+r}x_2^{f+s} = (x_1^ex_2^f)(x_1^rx_2^s)$ where $x_1^ex_2^f, x_1^rx_2^s \in \sg$. 
    Thus $x_1^ex_2^f$ divides $x_1^cx_2^d$ in $\sg$, a contradiction to $\x^c \xx^d \in \invab$.
    Hence $d \leq f$. 

    (3) can be proven using a similar argument to that of (2).

    The proof of (4) follows from the previous parts.
\end{proof}


We will now explicitly construct some elements in $\invab$.

\begin{lemma}\label{inv-divalg}
    Let $0 < b \leq b\inv < p$ and express $p$ as 
    \begin{align*}
        p &= bq +r
        \\
        p &= b\inv s + t.
    \end{align*}
    via the division algorithm. Then $\invab$ contains
    \begin{align*}
        \{ \x^{p-kb}\xx^k \mid 0 \leq k \leq q \} 
        \cup
        \{ \x^{m}\xx^{p-mb\inv} \mid 0 \leq m \leq s \}.
    \end{align*}
\end{lemma}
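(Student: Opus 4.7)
The plan is to show that each displayed monomial lies in $\invab$ by checking two conditions: (i) it is a $G$-invariant, and (ii) it is \emph{indecomposable}, meaning it cannot be written as a product of two non-constant invariant monomials. Since $\invab$ is the minimal monomial generating set of $\sg$, these two properties together place the monomial in $\invab$.

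First I would verify (i). The monomial $\x^{p-kb}\xx^k$ has total weight $(p-kb) + bk = p$, hence is invariant, and $0 \leq k \leq q$ guarantees $p - kb \geq p - qb = r \geq 0$. The monomial $\x^m \xx^{p-mb\inv}$ has weight $m + b(p - mb\inv) = bp + m(1 - bb\inv)$, which is $\equiv_p 0$ since $bb\inv \equiv_p 1$; and $0 \leq m \leq s$ guarantees $p - mb\inv \geq p - sb\inv = t \geq 0$.

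For condition (ii) on the first family, I would use a weight argument. Any non-constant monomial invariant $\x^e\xx^f$ satisfies $e+bf > 0$ and $e+bf \equiv_p 0$, hence $e + bf \geq p$. If $\x^{p-kb}\xx^k$ factored as a product of two non-constant invariants, their weights would sum to at least $2p$; but $\x^{p-kb}\xx^k$ has weight exactly $p$, a contradiction. This forces indecomposability for every element of the first family.

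The second family has weight $bp - m(bb\inv - 1)$, which generally exceeds $p$ and varies with $m$, so the uniform weight argument does not close the case directly. To handle it, I would invoke the ring isomorphism $\sg \cong S^G_{1,b\inv}$ induced by transposing $\x$ and $\xx$, as established in the preceding proposition; this transposition sends minimal monomial generators bijectively onto minimal monomial generators. Applying the argument of the previous paragraph to the action $(1,b\inv)$, with the division-algorithm pair $(s,t)$ in place of $(q,r)$, shows $\{\x^{p-mb\inv}\xx^m : 0 \leq m \leq s\} \subseteq \inva_{1,b\inv}$, and transposing $\x \leftrightarrow \xx$ then deposits the claimed second family into $\invab$. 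The main (modest) obstacle is simply that the two families require different treatments: the direct weight argument for the first, and the transposition symmetry to import it for the second, which is what lets us sidestep a direct factorization case analysis.
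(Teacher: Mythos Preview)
Your argument is correct, and it takes a genuinely different route from the paper's proof. The paper argues by first proving an auxiliary classification: any element $\x^c\xx^d \in \invab$ with $0 \le d \le q$ must satisfy $c = p-db$ (using Lemma~\ref{invariant-lemmas}(1) to pin down the residue). It then takes a generator of $\sg$ dividing $\x^{p-kb}\xx^k$, applies this classification to it, and concludes the divisor equals the whole monomial; the second family is handled by a symmetric direct argument. Your approach instead exploits the additive weight $w(\x^e\xx^f) = e+bf$: every non-constant invariant has weight at least $p$, while the first family has weight exactly $p$, so indecomposability is immediate. For the second family you import the same observation through the transposition isomorphism $\sg \cong S^G_{1,b\inv}$ (Proposition~3.3), which is cleaner than redoing a parallel case analysis. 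Your proof is shorter and more conceptual; the paper's approach has the minor advantage that its first paragraph isolates a reusable statement (the exact form of generators with $\xx$-exponent $\le q$), though the paper does not appear to reuse it later. One small point worth making explicit in your write-up: the weight argument for the first family does not use the hypothesis $b \le b\inv$, which is why it transfers verbatim to the action $(1,b\inv)$ even though that action has the inequality reversed.
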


\begin{proof}
    Suppose $\x^c \xx^d \in \invab$ such that $0 \leq d \leq q$. 
    Thus $c + bd \equiv_p 0$, or equivalently, $c \equiv_p p-bd$.
    We will show $c = p-db$. 
    First note that if $d = 0$ then $c= p$ since $c \equiv_p p-bd \equiv_p p$ and $c \leq p$ by Lemma \ref{invariant-lemmas} (1).
    Now consider when $0 < d \leq q$. 
    This inequality with $0 \leq p-qb$ provides $0 \leq p-db < p$.
    So $c \equiv_p p-db$ where $0 \leq c \leq p$ and $0 \leq p-db < p$.
    Hence $c = p-db$ necessarily follows.

    Now consider an arbitrary monomial $\x^{p-kb}\xx^k$ for some $0 \leq k \leq q$. 
    We know $\x^{p-kb}\xx^k$ lives in $\sg$ since $(p-kb)+ bk \equiv_p 0$. 
    Therefore there exists some generator $\x^c \xx^d \in \invab$ and invariant $g \in \sg$ such that $\x^{p-kb}\xx^k = \x^c \xx^d \cdot g$. 
    This relation requires $c \leq p-kb$ and $d \leq k$. 
    In particular, since $d \leq k$ and $k \leq q$, then $d \leq q$.
    So as $\x^c \xx^d \in \invab$ with $d \leq q$, then $c = p-db$ by the previous paragraph.
    Thus the inequality $c \leq p-kb$ becomes $p-db \leq p-kb$, which yields $k \leq d$.
    Since $d \leq k$ also holds, then $d = k$.
    Thus $\x^{p-kb}\x^k = \x^{p-db}\xx^d = \x^c\xx^d$ where $\x^c \xx^d \in \invab$.
    Hence $\{\x^{p-kb}\xx^k \mid 0 \leq k \leq q\} \subseteq \invab$.

    A similar argument proves $\{\x^{m}\xx^{p-mb\inv} \mid 0 \leq m \leq s \} \subseteq \invab$.
\end{proof}

\begin{rem}
    By Lemma \ref{inv-divalg}, for any $p$ and $b$, the monomials $\x^p , \x^{p-b} \xx, \x \xx^{p-b\inv}, \xx^p$ are always contained in $\invab$. These monomials are distinct when $b \neq p-1$.
    We are interested in the case when $\invab$ is precisely equal to
    $\{ \x^p , \x^{p-b} \xx, \x \xx^{p-b\inv}, \xx^p \}$.
\end{rem}

\begin{example}
    Let $G = \ZZ/17\ZZ$ and $b=10$. Thus $b\inv = 12$ and 
    $$\inva_{1,10} = \{\x^{17}, \x^7 \xx, \x^4 \xx^3, \x \xx^5, \xx^{17}\}.$$
    In reference to Lemma \ref{inv-divalg}, $p=bq+r=b\inv s+t$ for $q=1,r=7$ and $s=1,t=5$. Thus
    \begin{align*}
        &\{ \x^{p-kb}\xx^k \mid 0 \leq k \leq q \} 
        \cup
        \{ \x^{m}\xx^{p-mb\inv} \mid 0 \leq m \leq s \}
        \\
        &=
        \{ \x^{17-10k}\xx^k \mid 0 \leq k \leq 1 \} 
        \cup
        \{ \x^{m}\xx^{17-12m} \mid 0 \leq m \leq 1 \}
        \\
        &=
        \{\x^{17}, \x^7 \xx\}
        \cup
        \{\xx^{17}, \x \xx^5 \}
        \\
        &=
        \{ \x^{17}, \x^7 \xx, \x \xx^5, \xx^{17} \}.
    \end{align*}
    This is an example where $\invab$ is properly contained the union of sets given in the lemma. 
    In Example \ref{ex1}, the two are equal.
\end{example}

\begin{lemma}\label{inv-intersection}
    Let $1 < b < p$ and express $p$ as
    \begin{align*}
        p &= bq +r
        \\
        p &= b\inv s + t.
    \end{align*}
    via the division algorithm. Then
    $$\{ x_1^{p-kb}x_2^k \mid 0 \leq k \leq q \} \cap \{x_1^m x_2^{p-mb\inv} \mid 0 \leq m \leq s-1\} = \emptyset.$$
\end{lemma}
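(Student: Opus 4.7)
The plan is to argue by contradiction. Suppose the intersection were nonempty: then there exist integers $0 \leq k \leq q$ and $0 \leq m \leq s - 1$ with $\x^{p-kb}\xx^k = \x^m \xx^{p-mb\inv}$, which forces the integer equalities $m = p - kb$ and $k = p - mb\inv$. I would then derive contradictory bounds on $kb$.

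First I would extract a lower bound on $k$. From the division algorithm $p = b\inv s + t$ with $0 \leq t < b\inv$, the constraint $m \leq s - 1$ gives $mb\inv \leq (s-1)b\inv = p - t - b\inv \leq p - b\inv$; substituting into $k = p - mb\inv$ yields $k \geq b\inv$, and multiplying by $b$ gives $kb \geq bb\inv$. On the other side, the constraint $k \leq q$ combined with $p = bq + r$ gives $kb \leq qb = p - r \leq p$.

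Stacking these inequalities yields $bb\inv \leq p$. But $bb\inv$ is a positive integer congruent to $1 \pmod p$, and the hypothesis $1 < b < p$ forces $b\inv \neq 1$ (otherwise $b \equiv 1 \pmod p$ would give $b = 1$), so $bb\inv \geq 2$ and therefore $bb\inv \geq p + 1$. This contradicts $bb\inv \leq p$.

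The main obstacle is identifying the correct inequality to chase: the strict bound $m \leq s - 1$ (strictly stronger than $m \leq s$) is exactly what converts, via the division algorithm for $b\inv$, into the useful bound $k \geq b\inv$. Indeed, the weaker $m \leq s$ would fail in general, since for instance in Example~\ref{ex1} the monomial $\x \xx^2$, corresponding to $(k, m) = (2, s) = (2, 1)$, lies in both of the analogous sets once $m = s$ is permitted.
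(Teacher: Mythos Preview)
Your argument is correct. The chain $bb^{-1} \leq kb \leq qb = p-r \leq p$ together with $bb^{-1} \equiv 1 \pmod p$ and $bb^{-1} > 1$ (since $b > 1$) gives the desired contradiction, and every step is justified.

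The route, however, differs from the paper's. The paper does not argue directly from a hypothetical common element; instead it proves the stronger numerical fact $r \geq s$, which immediately separates the two sets by their $x_1$-exponents (the first set has $x_1$-exponent at least $r$, the second at most $s-1$). To obtain $r \geq s$ the paper invokes Lemmas~\ref{invariant-lemmas} and~\ref{inv-divalg}: assuming $r < s$ it places $x_1^r x_2^q$ and $x_1^r x_2^{p-rb^{-1}}$ in $\invab$, deduces $q = p - rb^{-1}$ via Lemma~\ref{invariant-lemmas}(4), and then shows this second division-algorithm expression $p = b^{-1}r + q$ (with $q < b^{-1}$) contradicts $p = b^{-1}s + t$. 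Your proof is more self-contained---it never touches $\invab$ or the earlier lemmas, working purely with the integer constraints $m = p - kb$, $k = p - mb^{-1}$---while the paper's version has the side benefit of establishing $r \geq s$, a fact that is reused in the proof of Theorem~\ref{2 SLOPES}. Your closing observation about why the bound $m \leq s-1$ (rather than $m \leq s$) is essential is apt and matches the phenomenon the paper's $r \geq s$ inequality captures.
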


\begin{proof}
    It suffices to prove $r \geq s$ since for $0 \leq k \leq q$, $p-kb$ ranges from $r$ to $p$.
    By way of contradiction suppose $r<s$. 
    Thus $\x^r \xx^q, \x^r \xx^{p-rb\inv}, \x^s \xx^t \in \invab$ by Lemma \ref{inv-divalg}.
    Since both $\x^r \xx^q$ and $\x^r \xx^{p-rb\inv}$ are contained in $\invab$ then $q = p-rb\inv$ by Lemma \ref{invariant-lemmas} (4).
    Therefore $p = b\inv r + q$.
    Now note that since $b > 1$ then $bb\inv = pk+1$ for some $k \geq 1$. 
    This yields $bb\inv \geq p+1 > p = bq+r > bq$, and so $b\inv > q$.
    So we have shown $p = b\inv r +q$ where $q < b\inv$ and $r < s$.
    This contradicts the expression $p = b\inv s + t$ via the division algorithm.
    Therefore $r \geq s$ indeed holds.
\end{proof}


\begin{rem}
    Note that Lemma \ref{inv-intersection} requires $b > 1$ since in the case $b=1$,
    \begin{align*}
        \{x_1^{p-kb}x_2^k \mid 0 \leq k \leq q \}  = \{x_1^{p-k}x_2^k \mid 0 \leq k \leq p \}
    \end{align*}
    and
    \begin{align*}
        \{x_1^m x_2^{p-mb\inv} \mid 0 \leq m \leq s-1\} = \{ x_1^m x_2^{p-m} \mid 0 \leq m \leq p-1\},
    \end{align*}
    which have nontrivial intersection. Let us prove precisely what $\inva_{1,1}$ is.
\end{rem}

\begin{prop}\label{b=1}
    $\inva_{1,1} = \{x_1^{p-k}x_2^k \mid 0 \leq k \leq p\}$.
\end{prop}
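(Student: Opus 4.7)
The plan is to establish both inclusions directly from the degree condition defining invariance. Since $b=1$, a monomial $x_1^c x_2^d$ lies in $S^G_{1,1}$ if and only if $c+d \equiv 0 \pmod{p}$, so every nonconstant invariant monomial has total degree at least $p$, and every invariant monomial has total degree a nonnegative multiple of $p$.

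First I would verify that each $x_1^{p-k}x_2^k$ for $0 \leq k \leq p$ is actually in the minimal generating set $\inva_{1,1}$. Invariance is immediate since $(p-k) + k = p \equiv_p 0$. To see that such a monomial cannot be reduced further, suppose $x_1^{p-k}x_2^k = f \cdot g$ with $f, g \in S^G_{1,1}$ both nonconstant. Then $\deg f, \deg g \geq p$, forcing $\deg f + \deg g \geq 2p > p = \deg(x_1^{p-k}x_2^k)$, a contradiction. So each such monomial is indecomposable in $S^G_{1,1}$ and therefore lies in $\inva_{1,1}$.

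Next I would show that these monomials suffice to generate $S^G_{1,1}$, i.e. that $\inva_{1,1}$ contains nothing else. Given any invariant monomial $x_1^c x_2^d$, write $c + d = np$ for some $n \geq 0$. The task reduces to choosing integers $0 \leq k_1, \ldots, k_n \leq p$ with $k_1 + \cdots + k_n = d$ (equivalently $(p-k_1) + \cdots + (p-k_n) = np - d = c$), since then
\[
x_1^c x_2^d = \prod_{i=1}^n x_1^{p-k_i} x_2^{k_i}
\]
exhibits $x_1^c x_2^d$ as a product of monomials from the proposed set. Such a decomposition of $d$ into $n$ parts bounded by $p$ exists precisely because $0 \leq d \leq np$, for instance by setting $k_i = p$ for $i < \lfloor d/p \rfloor$, one partial part, and the rest zero. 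Consequently any generator in $\inva_{1,1}$ is already one of the listed monomials.

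The argument has no real obstacle; the only point requiring a moment's care is the combinatorial decomposition step, which is straightforward once one notes that $0 \leq d \leq np = c + d$. Combining both inclusions yields $\inva_{1,1} = \{x_1^{p-k}x_2^k \mid 0 \leq k \leq p\}$.
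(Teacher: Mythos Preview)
Your proof is correct but takes a genuinely different route from the paper's. The paper handles both inclusions by invoking earlier lemmas: for $\inva_{1,1} \subseteq \{x_1^{p-k}x_2^k\}$ it uses Lemma~\ref{invariant-lemmas}(1) (any generator has exponents $\leq p$) together with $c \equiv_p p-d$ to force $c = p-d$; for the reverse inclusion it simply cites Lemma~\ref{inv-divalg}. Your argument is instead self-contained and exploits the special feature of $b=1$ that invariance is governed by \emph{total} degree: every nonconstant invariant monomial has total degree a positive multiple of $p$, so degree-$p$ invariants are automatically indecomposable, and any invariant of degree $np$ splits as a product of $n$ of them via an explicit partition of the $x_2$-exponent. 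The paper's approach is shorter because the groundwork has already been laid, while yours is more elementary and stands on its own without reference to the general lemmas. (One small slip: in your partition you want $k_i = p$ for $i \leq \lfloor d/p \rfloor$, not $i < \lfloor d/p \rfloor$, but the intent is clear.)
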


\begin{proof}
    Let $x_1^cx_2^d \in \inva_{1,1}$. 
    So $c + d \equiv_p 0$, or equivalently $c \equiv_p p-d$.
    By Lemma \ref{invariant-lemmas} (1), $c,d \leq p$.
    That is,  $c \equiv_p p-d$ where $0 \leq c,p-d \leq p$, and so $c= p-d$.
    Thus $\x^c \xx^d = \x^{p-d}\xx^d$. 
    So $\inva_{1,1} \subseteq \{x_1^{p-k}x_2^k \mid 0 \leq k \leq p\}$. 
    Lemma \ref{inv-divalg} provides the other containment $\{x_1^{p-k}x_2^k \mid 0 \leq k \leq p\} \subseteq \inva_{1,1}$.
\end{proof}

\begin{rem}
    Note that $S^G_{1,1} \cong \CC[x_1^p, x_1^{p-1}x_2, \dots, x_1x_2^{p-1}, x_2^p]$ is precisely the well-understood $p^{th}$ Veronese ring in 2 variables.
\end{rem}

\begin{prop}\label{p-1 inv}
    $\inva_{1,p-1} = \{x_1^p, x_1x_2, x_2^p\}$.
\end{prop}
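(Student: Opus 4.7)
The plan is to directly identify the invariant monomials using the congruence condition and then extract which ones are irreducible (i.e., generators). Since $b = p-1 \equiv -1 \pmod{p}$, a monomial $x_1^c x_2^d$ lies in $S^G_{1,p-1}$ precisely when $c + (p-1)d \equiv 0 \pmod{p}$, which simplifies to $c \equiv d \pmod{p}$.

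Combining this congruence with Lemma \ref{invariant-lemmas}(1), which forces $0 \leq c, d \leq p$ for any monomial in $\inva_{1,p-1}$, the only solutions are $c = d$ with $0 \leq c \leq p$, together with the boundary solutions $(c,d) = (p,0)$ and $(c,d) = (0,p)$. So every element of $\inva_{1,p-1}$ lies in the set $\{x_1^p, (x_1 x_2)^k \text{ for } 1 \leq k \leq p, x_2^p\}$ (the case $c = d = 0$ gives the constant $1$, which is not a generator).

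Next I would show that $x_1^p$, $x_1 x_2$, and $x_2^p$ are indeed in $\invab$ and that $(x_1 x_2)^k$ for $k \geq 2$ is not: since $x_1 x_2$ lies in $S^G_{1,p-1}$ and divides $(x_1 x_2)^k$ in this invariant ring, the higher powers are not minimal. Alternatively, this is an immediate consequence of Lemma \ref{invariant-lemmas}(4) applied to $x_1 x_2$ and $(x_1 x_2)^k$, which share the same exponent in $x_1$ (and in $x_2$) only if they are equal. For $x_1^p$ and $x_2^p$, they are clearly in $\sg$, and minimality follows because any strict divisor in $\sg$ would be a pure power of $x_1$ (resp.\ $x_2$) of degree strictly less than $p$, but the congruence $c \equiv 0 \pmod{p}$ with $0 < c < p$ has no solution.

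There is no real obstacle here; the argument is a short case analysis driven by the congruence $c \equiv d \pmod p$ and Lemma \ref{invariant-lemmas}. The only thing to be careful about is distinguishing the minimal generators from the non-minimal invariants $(x_1 x_2)^k$ with $k \geq 2$, which is handled cleanly by the divisibility observation above.
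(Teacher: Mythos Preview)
Your proof is correct. The route differs slightly from the paper's: the paper first invokes Lemma~\ref{inv-divalg} to place $\{x_1^p, x_1x_2, x_2^p\}$ inside $\inva_{1,p-1}$, and for the reverse containment it compares an arbitrary generator $x_1^c x_2^d$ (with $0<c,d<p$) against $x_1x_2$ via Lemma~\ref{invariant-lemmas}(2) to force $d\le 1$, hence $d=1$. You instead work directly with the congruence $c\equiv d\pmod p$, list all invariant monomials with $c,d\le p$, and then discard the non-minimal powers $(x_1x_2)^k$. Your argument is a touch more explicit and self-contained; the paper's is marginally shorter because it leans on the already-proven comparison lemma rather than re-deriving the structure from the congruence. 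Both are equally valid and essentially the same length.
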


\begin{proof}
    Lemma \ref{inv-divalg} gives us $\{x_1^p, x_1x_2, x_2^p\} \subseteq \inva_{1,p-1}$. 
    To prove the other containment suppose $x_1^cx_2^d \in \inva_{1,p-1}$ with $0< c,d < p$. 
    By comparing $x_1^cx_2^d$ with $x_1x_2$ in $\inva_{1,p-1}$, Lemma \ref{invariant-lemmas} (2) tells us $d \leq 1$ since $c \geq 1$.
    Hence $d = 1$ and so $x_1^cx_2^d = x_1x_2$. 
    This proves the other containment.
\end{proof}

\begin{lemma}\label{more than 4 inv}
    If $1 < b < \frac{p-1}{2}$ then $|\invab| > 4$.
\end{lemma}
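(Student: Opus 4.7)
The plan is to exhibit at least five distinct monomials in $\invab$ by combining Lemmas \ref{inv-divalg} and \ref{inv-intersection}. Set $A = \{\x^{p-kb}\xx^k : 0 \le k \le q\}$ and $B = \{\x^m \xx^{p-mb\inv} : 0 \le m \le s\}$. Lemma \ref{inv-divalg} gives $A \cup B \subseteq \invab$, and Lemma \ref{inv-intersection} implies that the only element of $B$ which could possibly coincide with an element of $A$ is the ``top'' element $\x^s\xx^t$ (the one with $m=s$). Consequently $|A \cap B| \le 1$ and $|A \cup B| \ge (q+1)+(s+1)-1 = q+s+1$.

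First I would record baseline bounds. The hypothesis $b < (p-1)/2$, combined with integrality, gives $b \le (p-3)/2$, so $2b \le p-3$, which forces $q = \lfloor p/b\rfloor \ge 2$ and in particular $p-2b \ge 3$. Since $b\inv < p$ we likewise have $s \ge 1$. Moreover, $p$ being prime with $1 < b < p$ rules out $b \mid p$, so $r = p - qb \ge 1$; hence every element of $A$ has strictly positive $\x$-degree.

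I would then split into three subcases. If $q \ge 3$, then $|A| \ge 4$, and $\xx^p \in B$ lies outside $A$ (it has $\x$-degree $0$), yielding at least five distinct elements. If $q = 2$ and $s \ge 2$, then $|A \cup B| \ge 3 + 3 - 1 = 5$ via the union bound above. The delicate case is $q = 2$ and $s = 1$, where the union bound only produces $4$. In this subcase, $B = \{\xx^p, \x\xx^t\}$; I would rule out the potential overlap $\x\xx^t \in A$ directly: such membership would require $p - kb = 1$ for some $0 \le k \le 2$, i.e., $kb = p-1$, but $kb \le 2b \le p-3 < p-1$, a contradiction. Hence $A \cap B = \emptyset$ and $|A \cup B| = 5$.

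The main obstacle is this final subcase: the counting bound from Lemma \ref{inv-intersection} alone is insufficient, and one must exploit the strict inequality $b \le (p-3)/2$ (equivalently $p - 2b \ge 3$) through an explicit exponent comparison to force disjointness. The boundary value $b = (p-1)/2$ really must be excluded, as Example \ref{ex1} (where $p = 7$ and $b = 3 = (p-1)/2$) shows exactly four invariants there.
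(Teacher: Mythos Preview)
Your proof is correct and follows essentially the same approach as the paper: both use Lemmas~\ref{inv-divalg} and~\ref{inv-intersection} to get the baseline count $|\invab| \ge q+1+s$, dispose of $q>2$ immediately, and then for $q=2$ invoke the strict inequality $b<(p-1)/2$ (equivalently $p-2b>1$) to separate $\x^{p-2b}\xx^2$ from $\x\xx^{p-b\inv}$ and reach five distinct invariants. The only cosmetic difference is that you further split the $q=2$ case by whether $s\ge 2$ or $s=1$, whereas the paper treats $q=2$ uniformly by simply listing the five monomials and checking the one nonobvious distinctness; the content is the same.
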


\begin{proof}
    Since $1 < b < \frac{p-1}{2}$, then $p=bq+r$ with $q \geq 2$. 
    By Lemmas \ref{inv-divalg} and \ref{inv-intersection}, $|\invab| \geq q+1+s$. 
    If $q > 2$, then $|\invab| > 2+1+1 = 4$. 
    If $q=2$ then $\invab$ contains $\{x_1^p, x_1^{p-b}x_2, x_1^{p-2b}x_2\} \cup \{x_2^p, x_1x_2^{p-b\inv} \}$ by Lemma \ref{inv-divalg}. 
    Since $b < \frac{p-1}{2}$ then $x_1^{p-2b}x_2$ and $x_1x_2^{p-b\inv}$ are distinct and hence $|\invab| \geq 5$.
    Therefore in either case $|\invab| > 4$.
\end{proof}

Hence $|\invab| = 4$ can only hold when $b \geq \frac{p-1}{2}$.

\begin{prop}\label{pk+1 more than 4 invs}
    If $(p-b)(p-b\inv) = pk+1$ with $k>1$ then $|\invab|>4$.
\end{prop}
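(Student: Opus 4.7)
The plan is to exhibit an explicit fifth minimal generator of $\sg$ beyond the four monomials $x_1^p,\; x_1^{p-b}x_2,\; x_1 x_2^{p-b\inv},\; x_2^p$ produced by Lemma \ref{inv-divalg} (these four are distinct since $k > 1$ precludes $b = p-1$). Write $b' = p-b$ and $b'' = p-b\inv$, so that $b'b'' = pk+1$ with $k \geq 2$; in particular $b', b'' \geq 2$, since $b' = 1$ would force $b'' \geq p+1$, contradicting $b'' \leq p-1$. For each $d \geq 0$, let $c(d) \in \{0,1,\dots,p-1\}$ denote the unique integer with $c(d) \equiv_p b'd$; then $(c(d), d)$ is the minimum-first-coordinate element of the invariance semigroup $L = \{(c,d) \in \NN^2 : c + bd \equiv_p 0\}$ with second coordinate $d$, and by Lemma \ref{invariant-lemmas}(4) any element of $\invab$ with second coordinate $d$ equals $(c(d),d)$.

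Let $d_* = \lceil p/b' \rceil$, so that $b'(d_*-1) < p \leq b'd_*$ and $c(d_*) = b'd_* - p \in [0, b'-1]$. I claim $x_1^{c(d_*)} x_2^{d_*}$ is indecomposable as a nontrivial product of elements of $\sg$, hence lies in $\invab$. Suppose we had a decomposition $(c(d_*), d_*) = (c_1, d_1) + (c_2, d_2)$ with both summands nonzero elements of $L$. The case $d_i = 0$ would force $c_i$ to be a positive multiple of $p$, impossible since $c_i \leq c(d_*) < p$; so $d_1, d_2 \geq 1$ with $d_1 + d_2 = d_*$, and then $c_i = c(d_i)$ follows from $c_i < p$. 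But for $d_1 \in \{1, \dots, d_* - 1\}$ the defining inequality of $d_*$ gives $b'd_1 < p$ and $b'(d_* - d_1) < p$, so $c(d_1) + c(d_* - d_1) = b'd_* \geq p > c(d_*)$, contradicting $c_1 + c_2 = c(d_*)$.

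To finish I must show $(c(d_*), d_*)$ is distinct from the four obvious generators, whose second coordinates lie in $\{0, 1, b'', p\}$. The bounds $2 \leq b' \leq p-1$ yield $2 \leq d_* \leq \lceil p/2 \rceil < p$, so $d_* \notin \{0, 1, p\}$. The crucial inequality $d_* < b''$ uses the hypothesis: since $b'b'' = pk+1$ with $k \geq 2$, we have $p/b' = (b'b'' - 1)/(kb') < b''/k \leq b''/2$, hence $d_* \leq \lceil b''/2 \rceil < b''$ (using $b'' \geq 2$). The main obstacle is a careful handling of the indecomposability criterion in the middle paragraph: the argument crucially leverages $c(d_*) < p$ both to force $d_1, d_2 \geq 1$ and to force $c_i = c(d_i)$ exactly (ruling out shifts by $p$ in the first coordinate). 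Once that is set up, the choice $d_* = \lceil p/b' \rceil$ makes the key equation $c(d_1) + c(d_* - d_1) = b'd_*$ fall directly out of the definition of the ceiling.
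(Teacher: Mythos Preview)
Your argument is correct and takes a genuinely different route from the paper's. The paper proceeds by case analysis on $b$: for $1 < b < \tfrac{p-1}{2}$ it simply invokes Lemma~\ref{more than 4 inv}, for $b = 1$ it invokes Proposition~\ref{b=1}, and only for $\tfrac{p-1}{2} < b < p-1$ does it argue directly, exhibiting the invariant monomial $x_1^{b+1}x_2^{p-b\inv-1}$ and checking (via a short computation that uses $k>1$) that it cannot lie in $\langle x_1^p,\, x_1^{p-b}x_2,\, x_1 x_2^{p-b\inv},\, x_2^p \rangle$, so that some fifth generator must exist. By contrast, you give a single uniform construction valid for all $b$: the monomial with $x_2$-exponent $d_* = \lceil p/(p-b) \rceil$ is shown directly to be an atom of the invariance semigroup, and the hypothesis $k \ge 2$ enters only through the ceiling estimate $d_* \le \lceil b''/2 \rceil < b''$ that separates your generator from $x_1 x_2^{p-b\inv}$. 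Your approach is more self-contained (it does not lean on Lemma~\ref{more than 4 inv} or Proposition~\ref{b=1}) and actually identifies the extra minimal generator explicitly, whereas the paper's argument in its main case only establishes that one exists.
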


\begin{proof}
    Suppose $(p-b)(p-b\inv) = pk+1$ with $k>1$. 
    We first note that $b \neq \frac{p-1}{2}, p-1$. This is because if $b = \frac{p-1}{2}$ then $b\inv = p-2$ and so
    $$(p-b)(p-b\inv) = \left(p - \frac{p-1}{2}\right)(p- (p-2)) = \left( \frac{p+1}{2}\right)\cdot 2 = p+1.$$ 
    If $b=p-1$ then $b\inv = p-1$ and hence $(p-b)(p-b\inv) = 1\cdot 1 = 1$.

    Now in the case $1 < b < \frac{p-1}{2}$, we immediately have $|\invab| > 4$ by Lemma \ref{more than 4 inv}. 
    If $b=1$ then $(p-b)(p-b\inv) = (p-1)^2 = p (p-2)+1$, which equals $pk+1$ with $k>1$ when $p>3$. 
    In which case, by Proposition \ref{b=1}, $|\inva_{1,1}| = p+1 > 3+1 = 4$.
    
    Therefore we have the $\frac{p-1}{2} < b < p-1$ case remaining.
    By Lemma \ref{inv-divalg}, we know $|\invab| \geq 4$ as $\invab$ contains $\{ x_1^p, x_1^{p-b}x_2, x_1x_2^{p-b\inv}, x_2^p\}$. 
    To prove $|\invab| > 4$, we will construct an element of $\sg$ that cannot be generated by these four invariants.
    Consider the monomial $x_1^{b+1}x_2^{p-b\inv -1}$. 
    Since 
    $$(b+1) + b(p-b\inv-1) \equiv_p b + 1 + bp -1 - b \equiv_p 0$$
    then $x_1^{b+1}x_2^{p-b\inv -1} \in \sg$. 
    Now by way of contradiction, suppose $x_1^{b+1}x_2^{p-b\inv -1} \in \langle x_1^p, x_1^{p-b}x_2, x_1x_2^{p-b\inv}, x_2^p \rangle$ in $\sg$. 
    Since $b+1,p-b\inv - 1 < p$ and $p-b\inv -1 < p-b\inv$ then $x_1^{b+1}x_2^{p-b\inv -1} \notin \langle x_1^p, x_2^p, x_1x_2^{p-b\inv} \rangle$. 
    Therefore $x_1^{b+1}x_2^{p-b\inv -1} \in \langle x_1^{p-b}x_2\rangle$ and so 
    \begin{align*}
        x_1^{b+1}x_2^{p-b\inv -1} &= (x_1^{p-b}x_2)^n \quad \text{for some } n \geq 1
        \\
        &=
        x_1^{n(p-b)} x_2^n.
    \end{align*}
    Hence $n(p-b) = b+1$ and $n = p-b\inv -1$. 
    As a result,
    \begin{align*}
        b+1 &= n(p-b)
        \\
        &= (p-b\inv -1)(p-b)
        \\
        &=
        (p-b\inv)(p-b) - (p-b)
        \\
        &=
        pk+1 - (p-b)
        \\
        &=
        p(k-1) + (b+1).
    \end{align*}
    That is, $p(k-1) + (b+1) = b+1$. 
    Hence $p(k-1) = 0$, a contradiction as $k > 1$.
    Therefore
    $$x_1^{b+1}x_2^{p-b\inv -1} \in \sg \setminus \langle x_1^p, x_2^p, x_1x_2^{p-b\inv} \rangle,$$
    and so there must exist at least one other monomial $x_1^cx_2^d \in \invab$ such that 
    \\
    $x_1^{b+1}x_2^{p-b\inv -1} = x_1^cx_2^d \cdot g$ for some $g \in \sg$. Thus $|\invab| > 4$.
\end{proof}

We can now provide the proof of Theorem \ref{thm2}.

\begin{proof}[Proof of Theorem \ref{thm2}]
    First we prove (2) implies (1). 
    Suppose $(p-b)(p-b\inv) =p+1$.
    Then $b \neq p-1$, because if this were the case, then $(p-b)(p-b\inv)$ would equal  1.
    Therefore $x_1^p, x_2^{p-b}x_2, x_1x_2^{p-b\inv}, x_2^p$ are distinct monomials, and by Lemma \ref{inv-divalg}, we know these are elements of $\invab$.
    By way of contradiction suppose there exists another monomial $x_1^cx_2^d \in \invab$. 
    Hence by Lemma \ref{invariant-lemmas} (2),(3), $1<c<p-b$ and $1< d < p-b\inv$.
    By definition of $\invab$, we know $c + bd \equiv_p 0$, and thus $c \equiv_p (p-b)d$. 
    Observe that $(p-b)d < (p-b)(p-b\inv) = p+1$.
    That is, $(p-b)d < p+1$, or equivalently $(p-b)d \leq p$.
    As $c \equiv_p (p-b)d$ where both $0<c< p$ and $0 < (p-b)d \leq p$, then $c = (p-b)d$. Hence $c > p-b$, a contradiction to $1 < c < p-b$.
    Therefore $|\invab| =4$ necessarily holds.

    Now we prove (1) implies (2). 
    Suppose $\sg$ is generated by 4 invariants. Proposition \ref{pk+1 more than 4 invs} implies that $(p-b)(p-b\inv)$ equals $1$ or $p+1$. 
    However, if $(p-b)(p-b\inv) = 1$ then $p-b = p-b\inv = 1$. 
    In this case, $b = p-1$ and Proposition \ref{p-1 inv} shows that $S^G_{1,p-1}$ is generated by 3 invariants. 
    Thus, this case does not occur, which implies $(p-b)(p-b\inv) = p+1$.
    This completes the proof.
\end{proof}

\begin{rem}
    Note for any odd prime $p$, that $(p-b)(p-b\inv) = p+1$ is always satisfied by $b = \frac{p-1}{2}$.
\end{rem}


\medskip

\section{Maximal Minor Presentation for the Codimension 2 Case}\label{section4}

We now determine the free resolution in the case when $\sg$ is generated by four invariants.
In particular, the Hilbert-Burch Theorem guarantees that in this instance, the free resolution will be presented by maximal minors, and we compute a corresponding matrix for this presentation.

\begin{definition}
    For a monomial $x_1^cx_2^d$, define $log_{x_1}(x_1^c x_2^d) = c$ and $log_{x_2}(x_1^cx_2^d) = d$.
\end{definition}

\begin{prop}\label{4 INV KERNEL}
    If $S^G_{1,b}$ is generated by 4 invariants then 
    $$\ker \varphi_{1,b} =  \langle y_1^{p-b\inv}-y_0y_2, y_2^{p-b}-y_1y_3, y_1^{p-b\inv-1}y_2^{p-b-1} - y_0y_3 \rangle .$$
\end{prop}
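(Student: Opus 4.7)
The plan is to first verify by direct computation that the three proposed binomials lie in $\ker \varphi_{1,b}$, and then establish the reverse inclusion via the Hilbert-Burch theorem together with a Hilbert-series comparison. Writing $f_1 = y_1^{p-b^{-1}} - y_0 y_2$, $f_2 = y_2^{p-b} - y_1 y_3$, and $f_3 = y_1^{p-b^{-1}-1}y_2^{p-b-1} - y_0 y_3$, the membership $f_i \in \ker \varphi_{1,b}$ reduces to the identity $(p-b)(p-b^{-1}) = p+1$ from Theorem~\ref{thm2}. For instance,
$$\varphi_{1,b}(y_1^{p-b^{-1}}) = x_1^{(p-b)(p-b^{-1})} x_2^{p-b^{-1}} = x_1^{p+1} x_2^{p-b^{-1}} = \varphi_{1,b}(y_0 y_2),$$
and analogous direct calculations give $\varphi_{1,b}(f_2) = \varphi_{1,b}(f_3) = 0$.

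For the reverse inclusion, I would observe that $f_1, -f_2, f_3$ are (up to sign) the $2 \times 2$ minors of
$$M = \begin{pmatrix} y_2^{p-b-1} & y_0 & -y_1 \\ y_3 & y_1^{p-b^{-1}-1} & -y_2 \end{pmatrix}.$$
Let $I = \langle f_1, f_2, f_3 \rangle$ and $J = \ker \varphi_{1,b}$. Since $S^G_{1,b}$ is a two-dimensional Cohen-Macaulay domain (CM by Hochster-Roberts), $J$ is prime of codimension $2$, so the containment $I \subseteq J$ forces $\operatorname{codim} I \leq 2$; combined with the general upper bound of $2$ on codimensions of ideals of maximal minors of a $2 \times 3$ matrix and a short verification (for instance, that $f_1, f_2$ form a regular sequence in $R$), this gives $\operatorname{codim} I = 2$. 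By Hilbert-Burch (Eisenbud, Theorem 20.15), $R/I$ is then Cohen-Macaulay of codimension $2$ with minimal free resolution $0 \to R^2 \xrightarrow{M^{T}} R^3 \to R \to R/I \to 0$.

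To conclude $I = J$, I would compare Hilbert series: that of $R/I$ is read off the resolution above (tracking the weighted degrees carefully), while that of $S^G_{1,b}$ is computed either via Molien's formula or by directly enumerating invariant monomials. Equality of the two, combined with the surjection $R/I \twoheadrightarrow R/J$, forces $I = J$. The main technical obstacle is this Hilbert-series bookkeeping. An alternative route I would also consider is to show that $R/I$ localized at each $y_i$ is a domain and that $\sqrt{I + (y_i)}$ has codimension at least $3$ for each $i$, so that (combined with Cohen-Macaulayness of $R/I$, which rules out embedded primes) $J$ becomes the unique minimal prime of $I$, reducing the remaining task to verifying that $I$ is radical.
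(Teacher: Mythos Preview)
Your outline is correct but takes a genuinely different route from the paper. The paper does not touch Hilbert series or Hilbert--Burch in this proposition (Hilbert--Burch appears only afterward, in Corollary~\ref{4 inv res}, once the kernel is already known). Instead, after checking $I \subseteq \ker\varphi_{1,b}$ via the toric description of the kernel, the paper argues directly: take a binomial $f \in \ker\varphi_{1,b}$, reduce it modulo a Gr\"obner basis of $I$ (grevlex with $y_0 > y_1 > y_2 > y_3$), and observe that the $y_1,y_2$-part of each monomial in the reduction $\overline{f}$ lies in the set $\Lambda$ of monomials not divisible by any of $y_1^{p-b^{-1}}$, $y_2^{p-b}$, $y_1^{p-b^{-1}-1}y_2^{p-b-1}$. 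The key step is a bijection $\Lambda \to \{0,1,\dots,p-1\}$ given by $y_1^q y_2^r \mapsto (p-b)q+r$, which uses $(p-b)(p-b^{-1})=p+1$ in a single stroke; combined with the $x_2$-exponent this forces the two monomials of $\overline{f}$ to coincide, so $\overline{f}=0$ and $f \in I$. This is in effect a hands-on count of standard monomials---the same information your Hilbert-series comparison would encode, but carried out explicitly so that no Molien computation or numerator-matching is left as bookkeeping. Your approach is more structural and would certainly work, but the step you flag as ``the main technical obstacle'' is exactly where the arithmetic of the problem lives, and the paper's bijection dispatches it cleanly (and the same template is reused for the five-invariant case in \S\ref{section5}).
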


\begin{proof} 
    For shorthand, let $I = \langle y_1^{p-b\inv}-y_0y_2, y_2^{p-b}-y_1y_3, y_1^{p-b\inv-1}y_2^{p-b-1} - y_0y_3 \rangle$.
    As $S^G_{1,b}$ is generated by 4 invariants then $\invab = \{ x_1^p, x_1^{p-b}x_2, x_1x_2^{p-b\inv}, x_2^p \}$, as shown in the proof of Theorem \ref{thm2}. Equivalently, $(p-b)(p-b\inv)=p+1$.
    By Theorem 65.5 of \cite{peeva}, $\ker \varphi_{1,b}$ is generated by
    $$\left\{ y_0^{v_0}y_1^{v_1}y_2^{v_2}y_3^{v_3} - y_0^{w_0}y_1^{w_1}y_2^{w_2}y_3^{w_3} 
    \Bigg|
    \renewcommand*{\arraystretch}{0.86}
    \begin{pmatrix}
        v_0 \\ v_1 \\ v_2 \\ v_3
    \end{pmatrix}, \begin{pmatrix}
        w_0 \\ w_1 \\ w_2 \\ w_3
    \end{pmatrix} \in \mathbb{N}^4, \;
    A \renewcommand*{\arraystretch}{0.86}
    \begin{pmatrix}
        v_0 \\ v_1 \\ v_2 \\ v_3
    \end{pmatrix} = A \begin{pmatrix}
        w_0 \\ w_1 \\ w_2 \\ w_3
    \end{pmatrix}
    \right\}$$
    where $A = \renewcommand*{\arraystretch}{1.2}
    \begin{pmatrix}
        p & p-b & 1 & 0
        \\
        0 & 1 & p-b\inv & p
    \end{pmatrix}$. Hence $y_1^{p-b\inv} - y_0y_2 \in \ker \varphib$ since
    \begin{align*}
        \renewcommand*{\arraystretch}{0.86}
        \begin{pmatrix}
            p & p-b & 1 & 0
            \\
            0 & 1 & p-b\inv & p
        \end{pmatrix}
        \begin{pmatrix}
            0 \\ p-b\inv \\ 0 \\ 0
        \end{pmatrix}
        =
        \begin{pmatrix}
            p+1 \\ p-b\inv
        \end{pmatrix}
        =
        \begin{pmatrix}
            p & p-b & 1 & 0
            \\
            0 & 1 & p-b\inv & p
        \end{pmatrix}
        \begin{pmatrix}
            1 \\ 0 \\ 1 \\ 0
        \end{pmatrix}.
    \end{align*}
    Similar computation proves the other two binomials are contained in $\ker \varphib$, and thus $I \subseteq \ker \varphi_{1,b}$.

    To prove the other containment, by way of contradiction suppose there exists some $f \in \ker \varphib \setminus I$. 
    Let $G$ be a Gröbner basis of $I$ with respect to reverse lexicographic order with $y_i > y_{i+1}$ for $0 \leq i \leq 2$. 
    Note that for the reduction $\overline{f}$ of $f$ by $G$, it follows that $\varphib (\overline{f}) = \varphib(f) = 0$ since $G \subseteq I \subseteq \ker \varphib$.
    Also since $\overline{f} \in \ker \varphib$ then 
    $$\overline{f} = y_0^{v_0}y_1^{v_1}y_2^{v_2}y_3^{v_3} - y_0^{w_0}y_1^{w_1}y_2^{w_2}y_3^{w_3}$$
    for some $v_i,w_i \geq 0$ and hence $\log_{x_1}( \varphib(y_0^{v_0}y_1^{v_1}y_2^{v_2}y_3^{v_3})) = \log_{x_1}(\varphib(y_0^{w_0}y_1^{w_1}y_2^{w_2}y_3^{w_3}))$.
    That is, $pv_0 + \log_{x_1}( \varphib(y_1^{v_1}y_2^{v_2})) = pw_0 + \log_{x_1}( \varphib(y_1^{w_1}y_2^{w_2}))$. Additionally by definition of $G$, no monomial of $\overline{f}$ is divisible by any of the terms in 
    $$\{y_1^{p-b\inv}, y_2^{p-b}, y_1^{p-b\inv-1}y_2^{p-b-1} \}.$$
    Hence $y_1^{v_1}y_2^{v_2}, y_1^{w_1}y_2^{w_2} \in \Lambda$ where $\Lambda$ is given by
    $$\Lambda = \{ y_1^q y_2^r \mid 0 \leq q \leq p-b\inv -1, \; 0 \leq r \leq p-b-1, (q,r) \neq (p-b\inv-1, p-b-1)\}.$$
    We aim to prove $(v_0,v_1,v_2,v_3)= (w_0,w_1,w_2,w_3)$ by showing there is a bijection between $\Lambda$ and $\{0,1,\dots,p-1\}$. 
    First note that for any $0 \leq q \leq p-b\inv-1$ and $0 \leq r \leq p-b-1$ with $(q,r) \neq (p-b\inv-1,p-b-1)$,
    $$0 \leq (p-b)q+r < (p-b)(p-b\inv-1) + (p-b-1) = (p-b)(p-b\inv) - 1 = (p+1)-1 = p.$$
    Thus we can define the map
    \begin{align*}
        \Lambda &\rightarrow \{0,1,\dots,p-1\}
        \\
        y_1^qy_2^r &\mapsto \log_{x_1}(\varphib(y_1^q y_2^r)) = (p-b)q+r.
    \end{align*}
    As $0 \leq q \leq p-b\inv-1$ and $0 \leq r \leq p-b-1$, the map is injective. 
    Now consider $z \in \{0, 1, \dots, p-1\}$. 
    By the division algorithm, we can express $z = (p-b)q+r$ for some $0 \leq r \leq p-b-1$.
    Further observe
    $$(p-b)(p-b\inv) = p+1 > p-1 \geq z = (p-b)q+r \geq (p-b)q.$$
    Hence $0 \leq q \leq p-b\inv -1$. 
    We also note that $(q,r) \neq (p-b\inv-1, p-b-1)$, otherwise $z = (p-b)(p-b\inv-1) + (p-b-1) = p$. 
    Thus the map is indeed a bijection.
    Revisiting $pv_0 + \log_{x_1}( \varphib(y_1^{v_1}y_2^{v_2})) = pw_0 + \log_{x_1}( \varphib(y_1^{w_1}y_2^{w_2}))$, we can now conclude $(v_0,v_1,v_2) = (w_0, w_1,w_2)$.
    To prove $v_3= w_3$, note that we likewise have 
    $\log_{x_2}( \varphib(y_0^{v_0}y_1^{v_1}y_2^{v_2}y_3^{v_3})) = \log_{x_2}(\varphib(y_0^{w_0}y_1^{w_1}y_2^{w_2}y_3^{w_3})).$
    That is, 
    $$v_1 + (p-b\inv)v_2 + pv_3 = w_1 + (p-b\inv)w_2 + pw_3.$$ 
    As $(v_0,v_1,v_2) = (w_0, w_1,w_2)$, then $v_3 = w_3$ follows. 
    Therefore $\overline{f} = 0$ and hence $f \in G \subseteq I$. 
    This contradicts $f \in \ker \varphib \setminus I$ and so $\ker \varphib = I$ necessarily holds.
\end{proof}

\begin{cor}\label{4 inv res}
    If $\sg$ is generated by 4 invariants then the free resolution of $R^1/\ker \varphib$ is
    $$R^1
    \xlongleftarrow{[\ker \varphi_{1,b}]}
    \renewcommand*{\arraystretch}{0.9}
    \begin{tabular}{ c }
        $R^1(-(2p-b\inv +1))$
        \\
        $\bigoplus$
        \\
        $R^1(-(2p-b+1))$
        \\
        $\bigoplus$
        \\
        $R^1(-2p)$
    \end{tabular}
    \xlongleftarrow{
    \begin{bmatrix}
        -y_3 & -y_2^{p-b-1}
        \\
        -y_1^{p-b\inv-1} & -y_0
        \\
        y_2 & y_1
    \end{bmatrix}
    } 
    \renewcommand*{\arraystretch}{0.9}
    \begin{tabular}{ c }
        $R^1(-(3p-b\inv +1))$
        \\
        $\bigoplus$
        \\
        $R^1(-(3p-b+1))$
    \end{tabular}
    $$
\end{cor}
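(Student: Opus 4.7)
The plan is to apply the Hilbert-Burch Theorem (\cite{eisenbudcommalg}, Theorem 20.15). As a first step, I would verify that $R/\ker\varphib$ is a Cohen-Macaulay quotient of $R$ of codimension two. The codimension is given by Theorem \ref{thm2}, while Cohen-Macaulayness follows from the Hochster-Eagon theorem: $G = \ZZ/p\ZZ$ acts linearly on $S$ and $|G|$ is invertible in $\CC$, so $S^G_{1,b}$ is Cohen-Macaulay. Combined with the three-generator presentation from Proposition \ref{4 INV KERNEL}, the Hilbert-Burch hypotheses are met, so the minimal free resolution has the shape
$$0 \to R^2 \xrightarrow{\psi} R^3 \xrightarrow{[\ker\varphib]} R \to R/\ker\varphib \to 0,$$
where, up to multiplication by a common nonzerodivisor, $\ker\varphib$ is the ideal of $2\times 2$ minors of $\psi$.

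The second step is to identify $\psi$ as the displayed matrix $M$. For this I would directly expand the three signed maximal minors of $M$. A short computation shows that, up to sign, they recover $y_1^{p-b\inv} - y_0 y_2$, $y_2^{p-b} - y_1 y_3$, and $y_1^{p-b\inv-1}y_2^{p-b-1} - y_0 y_3$, matching the generators from Proposition \ref{4 INV KERNEL} exactly. Since the ideal of minors coincides with $\ker\varphib$ itself rather than a proper multiple, the nonzerodivisor appearing in Hilbert-Burch is a unit, and $M$ is the presenting matrix.

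The final step is the degree bookkeeping. Using $\deg y_0 = \deg y_3 = p$, $\deg y_1 = p - b + 1$, and $\deg y_2 = p - b\inv + 1$, together with the identity $(p-b)(p-b\inv) = p+1$ from Theorem \ref{thm2}, one checks that the three generators have degrees $2p - b\inv + 1$, $2p - b + 1$, and $2p$, which gives the middle summand. Similarly, each column of $M$ is homogeneous with the shifts needed to produce the rightmost summands of degrees $3p - b\inv + 1$ and $3p - b + 1$. I expect this bookkeeping to be the most error-prone part, since the shifts rely delicately on $(p-b)(p-b\inv) = p+1$; however, no deeper idea is required beyond careful computation, and Hilbert-Burch guarantees minimality of the resulting complex.
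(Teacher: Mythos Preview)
Your proposal is correct and follows essentially the same route as the paper: cite Proposition \ref{4 INV KERNEL} for the three generators, check that they are the signed $2\times 2$ minors of the displayed matrix, and invoke Hilbert--Burch using codimension two and Cohen--Macaulayness. The paper's argument is terser---it simply asserts that $R/\ker\varphib\cong S^G_{1,b}$ is Cohen--Macaulay and that the resolution is ``as proposed''---whereas you supply the Hochster--Eagon justification and spell out the degree bookkeeping, but there is no substantive difference in strategy.
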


\begin{proof}
    By Proposition \ref{4 INV KERNEL}, $\ker \varphib = \langle y_1^{p-b\inv}-y_0y_2, y_2^{p-b}-y_1y_3, y_1^{p-b\inv-1}y_2^{p-b-1} - y_0y_3\rangle$. In particular, $\ker \varphib$ is generated by the maximal minors of  
    $$\begin{bmatrix}
        -y_3 & -y_2^{p-b-1}
        \\
        -y_1^{p-b\inv-1} & -y_0
        \\
        y_2 & y_1
    \end{bmatrix}.$$
    Since codim $\ker \varphib = 2$ and $R/\ker \varphib \cong S^G_{1,b}$ is Cohen-Macaulay, the Hilbert-Burch Theorem (\cite{eisenbudcommalg}, Theorem 20.15) provides the free resolution of $S^G_{1,b}$, which is as proposed.
\end{proof}

\medskip
\section{Case for $(p-b)(p-b\inv) = 2p+1$}\label{section5}

Naturally, we ask if there is an analogous theorem to Theorem \ref{thm2} in the case $(p-b)(p-b\inv) = 2p+1$. In this section, we connect the conditions that yield five generating invariants to the case when $(p-b)(p-b\inv) = 2p+1$. We start with the following proposition.

\begin{prop}\label{2p+1 inv}
    If $(p-b)(p-b\inv) = 2p+1$ then $|\invab| = 5$. 
\end{prop}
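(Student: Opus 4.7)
Let $\beta = p-b$ and $\gamma = p-b\inv$, so the hypothesis becomes $\beta\gamma = 2p+1$. One quickly checks $\gamma \geq 3$ (since $\gamma=1$ gives $\beta > p$ and $\gamma=2$ gives non-integer $\beta$), and by symmetry $\beta \geq \gamma$. Proposition \ref{pk+1 more than 4 invs} already yields $|\invab| \geq 5$, so the work is to establish the matching upper bound. The plan is to exhibit a single ``middle'' invariant beyond the four monomials $\x^p, \x^{p-b}\xx, \x\xx^{p-b\inv}, \xx^p$ coming from Lemma \ref{inv-divalg}, and then argue that no others can exist.

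Set $d^\star = \lceil p/\beta \rceil$ and $c^\star = \beta d^\star - p$. I would first verify the arithmetic bounds $2 \leq d^\star < \gamma$ and $c^\star \geq 2$: all three are short consequences of $\beta < p$ and $\beta\gamma = 2p+1$ (for instance, $c^\star = 1$ would combine $\beta \mid p+1$ and $\beta \mid 2p+1$ to give $\beta \mid 1$, contradicting $\beta \geq 3$). Then $\x^{c^\star}\xx^{d^\star}$ is an invariant because $c^\star + bd^\star = p(d^\star - 1) \equiv_p 0$, and its minimality follows immediately from Lemma \ref{invariant-lemmas}: any putative invariant divisor $(c', d')$ with $1 \leq d' < d^\star$ must have $c' = \beta d'$ (since $\beta d' < p$), and then $\beta(d^\star - d') < p$ forces $c' > c^\star$. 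So $\x^{c^\star}\xx^{d^\star}$ is a fifth generator, distinct from the four above.

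For the matching upper bound, suppose $\x^c\xx^d \in \invab$ with $c, d \geq 1$ and $(c, d) \notin \{(\beta, 1), (1, \gamma)\}$. Lemma \ref{invariant-lemmas}(2)-(4) applied against the known invariants $\x^\beta\xx$ and $\x\xx^\gamma$ immediately forces $1 \leq c < \beta$ and $1 \leq d < \gamma$. The congruence $c \equiv_p \beta d$ together with the bound $\beta d \leq 2p$ (from $\beta d < \beta\gamma = 2p+1$) admits a three-case analysis according to the size of $\beta d$: the case $\beta d < p$ gives $c = \beta d \geq \beta$, contradicting $c < \beta$; the case $\beta d = 2p$ is ruled out since $\gcd(\beta,p)=1$ with $\beta \geq 3$ forbids $\beta \mid 2p$; and the case $p \leq \beta d < 2p$ forces $c = \beta d - p$, and combining with $c < \beta$ (equivalently $\beta(d-1) < p$) pins down $d = d^\star$ and $(c,d) = (c^\star, d^\star)$. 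Hence the only ``interior'' invariants are $(\beta, 1), (c^\star, d^\star), (1, \gamma)$; together with $\x^p$ and $\xx^p$, this gives exactly five generators.

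The main delicate point is the arithmetic bookkeeping: verifying $2 \leq d^\star < \gamma$ and $c^\star \geq 2$ so that the middle invariant is genuinely new and irreducible, and exploiting $\beta\gamma = 2p+1$ to forbid $\beta d \geq 2p$ in the final case. Once these bounds are in hand, Lemma \ref{invariant-lemmas} and the congruence $c \equiv_p \beta d$ finish everything cleanly.
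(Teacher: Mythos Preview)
Your argument is correct and takes a genuinely different route from the paper's proof. The paper splits into two cases according to whether $b < \frac{p-1}{2}$ or $b > \frac{p-1}{2}$, and in each case writes down the middle invariant explicitly: $\x^{p-2b}\xx^2$ in the first case, $\x^{(p-b+1)/2}\xx^{(p-b^{-1}+1)/2}$ in the second (using that $\beta\gamma=2p+1$ forces $\beta,\gamma$ odd). Each case then gets its own contradiction argument to exclude a sixth generator. Your uniform description $d^\star=\lceil p/\beta\rceil$, $c^\star=\beta d^\star-p$ subsumes both formulas at once and lets you run a single three-case analysis on the size of $\beta d$ in place of the paper's two separate arguments. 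The trade-off is that the paper's case split makes the middle invariant completely explicit (which feeds directly into the determinantal descriptions of $\ker\varphib$ in Propositions~\ref{2p+1 lower ker} and~\ref{2p+1 upper ker}), whereas your formulation is cleaner for the counting statement itself but would still need to be unpacked case-by-case for those later results. One small point worth making explicit in a full write-up: your minimality check for $\x^{c^\star}\xx^{d^\star}$ only treats divisors with $1\le d'<d^\star$; the remaining possibilities $d'=0$ and $d'=d^\star$ are immediate from $0<c^\star<p$, but should be mentioned.
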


\begin{proof}
    Suppose $(p-b)(p-b\inv) = 2p+1$. 
    Therefore $b \neq 1, \; \frac{p-1}{2}, \; p-1$, otherwise $(p-b)(p-b\inv)$ would equal $p(p-2)+1, \; p+1, \; 1$, respectively. 
    By Proposition \ref{pk+1 more than 4 invs}, we know $|\invab | > 4$ with $\{ x_1^p, x_1^{p-b}x_2, x_1x_2^{p-b\inv}, x_2^p \} \subsetneq \invab$. 
    We will prove what the fifth invariant is separately for $1 < b < \frac{p-1}{2}$ and $\frac{p-1}{2} < b < p$.

    If $1 < b < \frac{p-1}{2}$ we claim that $\invab = \{ x_1^p, x_1^{p-b}x_2, x_1^{p-2b}x_2^2, x_1x_2^{p-b\inv}, x_2^p\}$. 
    As $1 < b < \frac{p-1}{2}$, then $p=bq+r$ where $q \geq 2$ and so by Lemma \ref{inv-divalg}, $\invab$ contains $\{ x_1^p, x_1^{p-b}x_2, x_1^{p-2b}x_2^2, x_1x_2^{p-b\inv}, x_2^p\}$. 
    By way of contradiction suppose there exists another monomial $x_1^c x_2^d \in \invab$. 
    Thus by Lemma \ref{invariant-lemmas} (4), $d > 2$ and $c > 1$.
    Then by parts (2) and (3) of the same lemma, $c < p-2b$ and $d < p-b\inv$.
    Furthermore by definition of $\invab$, $c + bd \equiv_p 0$.
    So in particular, $c \equiv_p (p-b)d - p$.
    We will show $c$ is exactly equal to $(p-b)d-p$.
    Note that as $b < \frac{p-1}{2}$ and $d >  2$ then $(p-b)d - p > \left(p-\frac{p-1}{2}\right)\cdot 2 - p = \left( \frac{p+1}{2}\right) \cdot 2 - p = 1$.
    Also since $d < p-b\inv$ and $(p-b)(p-b\inv) = 2p+1$ then $(p-b)d - p < (p-b)(p-b\inv) - p = 2p+1-p = p+1$.
    That is, $(p-b)d - p < p+1$, or equivalently, $(p-b)d - p \leq p$.
    So as $c \equiv_p (p-b)d-p$ where $1 < c < p-2b$ and $1 < (p-b)d-p \leq p$, then $c = (p-b)d-p$ necessarily holds. 
    This equality with $c < p-2b$ and $d>2$ yields
    $p-2b > c = (p-b)d - p > (p-b) \cdot 2 - p = p-2b$.
    That is, $p-2b > p-2b$, a contradiction. Hence $\invab = \{ x_1^p, x_1^{p-b}x_2, x_1^{p-2b}x_2^2, x_1x_2^{p-b\inv}, x_2^p\}$ in the case $1 < b < \frac{p-1}{2}$.

    Now let $\frac{p-1}{2} < b < p-1$. We claim $\invab = \{ x_1^p, x_1^{p-b}x_2, x_1^{\frac{p-b+1}{2}}x_2^{\frac{p-b\inv+1}{2}} , x_1x_2^{p-b\inv}, x_2^p\}$. Note that $\frac{p-b+1}{2}$ and $\frac{p-b\inv+1}{2}$ are in fact integers since $(p-b)(p-b\inv) = 2p+1$ and so $p-b$ and $p-b\inv$ are both odd. Now observe
    \begin{align*}
        \frac{p-b+1}{2} + b \left( \frac{p-b\inv+1}{2} \right) 
        &\equiv_p
        \frac{p+b(p-b\inv)+1}{2}
        \\
        &\equiv_p
        \frac{p+(p-p+b)(p-b\inv)+1}{2}
        \\
        &\equiv_p
        \frac{p+p(p-b\inv)-(p-b)(p-b\inv)+1}{2}
        \\
        &\equiv_p
        \frac{p+p(p-b\inv)-(2p+1)+1}{2}
        \\
        &\equiv_p
        p \left( \frac{p-b\inv-1}{2}\right)
        \\
        &\equiv_p
        0.
    \end{align*}
    Therefore $x_1^{\frac{p-b+1}{2}}x_2^{\frac{p-b\inv+1}{2}} \in S^G_{1,b}$. 
    Note that $\frac{p-b+1}{2} < p-b < p$ and thus $x_1^{\frac{p-b+1}{2}}x_2^{\frac{p-b\inv+1}{2}} \notin \langle x_1^p, x_1^{p-b}x_2 \rangle$. 
    Similarly since $\frac{p-b\inv+1}{2} < p-b\inv < p$, then $x_1^{\frac{p-b+1}{2}}x_2^{\frac{p-b\inv+1}{2}} \notin \langle x_1x_2^{p-b\inv}, x_2^p \rangle$. 
    Thus $x_1^{\frac{p-b+1}{2}}x_2^{\frac{p-b\inv+1}{2}} \notin \langle x_1^p, x_1^{p-b}x_2, x_1x_2^{p-b\inv}, x_2^p \rangle$ in $\sg$ and so there exists at least one other monomial 
    $\x^c \xx^d \in \invab$ that generates $x_1^{\frac{p-b+1}{2}}x_2^{\frac{p-b\inv+1}{2}}$. 
    This requires $c \leq \frac{p-b+1}{2}$. 
    To prove $x_1^{\frac{p-b+1}{2}}x_2^{\frac{p-b\inv+1}{2}} \in \invab$, it suffices to show $c = \frac{p-b+1}{2}$.
    By way of contradiction, suppose $c < \frac{p-b+1}{2}$, or equivalently, $c \leq \frac{p-b-1}{2}$.
    Now consider the monomial $\x^c \xx^{c(p-b\inv)}$.
    Since $c + b[c(p-b\inv)] \equiv_p c + bcp -  c bb\inv \equiv_p 0$, then $\x^c \xx^{c(p-b\inv)} \in \sg$.
    Furthermore, $d \equiv_p c(p-b\inv)$. 
    Additionally observe
    \begin{align*}
        c(p-b\inv)
        &\leq
        \left( \frac{p-b-1}{2} \right)(p-b\inv)
        \\
        &=
        \frac{(p-b)(p-b\inv) - (p-b\inv)}{2}
        \\
        &=
        \frac{(2p+1)-(p-b\inv)}{2}
        \\
        &=
        \frac{p+b\inv+1}{2}
        \\
        &<
        \frac{p + (p-1)+2}{2} \qquad \text{since $b < p-1$ and so $b\inv < p-1$}
        \\
        &= \frac{2p+1}{2}
        \\
        &< p+1.
    \end{align*}
    That is, $c(p-b\inv) < p+1$. As $p$ is prime, we further have $c(p-b\inv) < p$.
    We also note that $d \leq p$ by Lemma \ref{invariant-lemmas} (1).
    So as $d \equiv_p c(p-b\inv)$
    with $d \leq p$ and $c(p-b\inv) < p$, then $d = c(p-b\inv)$.
    Thus $\x^c \xx^d = \x^c \xx^{c(p-b\inv)} = (\x \xx^{p-b\inv})^c \in \langle \x \xx^{p-b\inv} \rangle$.
    That is, $\x \xx^{p-b\inv}$ generates $\x^c \xx^d$ in $\sg$, a contradiction to $\x^c \xx^d \in \invab$.
    Therefore $c = \frac{p-b+1}{2}$ must hold, and so we have the containment $\{ x_1^p, x_1^{p-b}x_2, x_1^{\frac{p-b+1}{2}}x_2^{\frac{p-b\inv+1}{2}} , x_1x_2^{p-b\inv}, x_2^p\} \subseteq \invab$.

    To prove the other containment, by way of contradiction suppose there exists another generating invariant $\x^e \xx^f \in \invab$.
    We note that $e > \frac{p-b+1}{2}$ since if $e < \frac{p-b+1}{2}$ then $\x^e \xx^f$ is generated by $\x \xx^{p-b\inv}$ by the argument used in proving the other containment.
    Thus by Lemma \ref{invariant-lemmas} (2), $f < \frac{p-b\inv+1}{2}$.
    By a similar argument to that of $\x^c \xx^{c(p-b\inv)}$, we conclude $\x^e \xx^f = \x^{f(p-b)} \xx^f \in \langle \x^{p-b} \xx \rangle$.
    Therefore we have reached a contradiction. 
    As a result, $\invab = \{ x_1^p, x_1^{p-b}x_2, x_1^{\frac{p-b+1}{2}}x_2^{\frac{p-b\inv+1}{2}} , x_1x_2^{p-b\inv}, x_2^p\}$ for $\frac{p-1}{2} < b < p-1$. 
    This completes the proof of the proposition.
\end{proof}

\begin{rem}
    We recall here that the converse of Proposition \ref{2p+1 inv} is not true (see Example \ref{ex2}, redefining $a =1, b = 5$).
\end{rem}

\begin{lemma}\label{p-1/3}
    Let $b < \frac{p-1}{2}$. Then $(p-b)(p-b\inv) = 2p+1$ if and only if $\frac{p-1}{3} \in \ZZ$ and $b = \frac{p-1}{3}$.
\end{lemma}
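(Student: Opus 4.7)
The plan is to prove the two implications separately: the reverse direction is a one-line verification, while the forward direction reduces to a short case analysis after the right inequality is set up.

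For the reverse direction, I would assume $(p-1)/3 \in \ZZ$ and $b = (p-1)/3$. Since $p$ is prime with $3 \mid p-1$, necessarily $p \geq 7$, so $0 < p-3 < p$. From $3b = p-1 \equiv -1 \pmod p$ one reads off $b\inv = p-3$, and then a direct computation gives
\[
(p-b)(p-b\inv) \;=\; \frac{2p+1}{3} \cdot 3 \;=\; 2p+1.
\]
The inequality $b = (p-1)/3 < (p-1)/2$ is automatic.

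For the forward direction, the idea is to convert the hypothesis $b < (p-1)/2$ into a strong upper bound on $p-b\inv$. Since $p$ is odd and $b$ is an integer, $b \leq (p-3)/2$, so $p-b \geq (p+3)/2$, and dividing $(p-b)(p-b\inv) = 2p+1$ by $p-b$ yields
\[
p - b\inv \;=\; \frac{2p+1}{p-b} \;\leq\; \frac{2(2p+1)}{p+3} \;<\; 4.
\]
Using $1 \leq b\inv \leq p-1$, this pins $p - b\inv$ to $\{1,2,3\}$. I then plan to eliminate the first two cases: $p - b\inv = 1$ forces $p - b = 2p+1$, giving a negative $b$; $p - b\inv = 2$ forces $2(p-b) = 2p+1$, which fails by parity. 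The remaining case $p - b\inv = 3$ requires $3 \mid 2p+1$, i.e.\ $p \equiv 1 \pmod 3$, and then $b = p - (2p+1)/3 = (p-1)/3$ as desired.

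The main obstacle is really just identifying the right quantity to bound. Once the inequality $p - b\inv < 4$ is in hand, the rest is mechanical: a sign check kills case $1$, a parity check kills case $2$, and case $3$ delivers both conclusions of the lemma simultaneously. The reverse direction requires nothing beyond a quick modular arithmetic step to compute $b\inv$.
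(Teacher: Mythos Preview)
Your proposal is correct and follows essentially the same approach as the paper: both derive the bound $p-b\inv<4$ from $b<(p-1)/2$, eliminate $p-b\inv\in\{1,2\}$ by elementary checks, and finish with $p-b\inv=3$, while the reverse direction is the same direct verification via $b\inv=p-3$. The only cosmetic differences are that you use the integer bound $b\le(p-3)/2$ and a sign argument for the case $p-b\inv=1$, whereas the paper uses the strict inequality $p-b>(p+1)/2$ and rules out that case by noting it forces $b=b\inv=p-1$.
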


\begin{proof}
    Assume $(p-b)(p-b\inv)=2p+1$. 
    As a result,
    $2p+1 = (p-b)(p-b\inv) > \left(p- \frac{p-1}{2} \right)(p-b\inv) = \left( \frac{p+1}{2}\right)(p-b\inv)$, and so $4p+2 > (p+1)(p-b\inv)$. 
    Consequently $4(p+1) > (p+1)(p-b\inv)$, and so $4 > p-b\inv$. 
    We note that $p-b\inv \neq 1$ otherwise $(p-b)(p-b\inv)$ would equal $1$. 
    Since $(p-b)(p-b\inv)$ is odd, we add that $p-b\inv \neq 2$.
    Thus our only possibility is $p-b\inv = 3$. 
    Therefore $2p+1 = (p-b)\cdot 3$, or equivalently, $3b = p-1$. 
    Thus $\frac{p-1}{3} \in \ZZ$ and $b = \frac{p-1}{3}$.

    Now suppose $\frac{p-1}{3} \in \ZZ$ and $b = \frac{p-1}{3}$. Thus $b\inv = p-3$ since
    $$\left( \frac{p-1}{3} \right)(p-3) \equiv_p  \left( \frac{p-1}{3} \right)(-3) \equiv_p 1-p \equiv_p 1.$$
    Hence $(p-b)(p-b\inv) = \left( p - \frac{p-1}{3}\right)(3) = 3p - (p-1) = 2p+1$.
\end{proof}

\begin{prop}\label{2p+1 lower ker}
    If $b < \frac{p-1}{2}$ and $(p-b)(p-b\inv) = 2p+1$ then
    $$\ker \varphib = \langle y_2^2 - y_1y_3, y_1y_2-y_0y_3, y_3^{b+1}-y_2y_4, y_1^2-y_0y_2, y_2y_3^b-y_1y_4, y_1y_3^b - y_0y_4 \rangle.$$
\end{prop}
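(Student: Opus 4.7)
The plan is to mirror the Gröbner basis strategy of Proposition \ref{4 INV KERNEL}. Under the hypotheses, Lemma \ref{p-1/3} gives $b = (p-1)/3$ and $b\inv = p-3$, so that $p = 3b+1$ and $p - b\inv = 3$. By Proposition \ref{2p+1 inv}, the map $\varphib$ sends $y_0 \mapsto x_1^{3b+1}$, $y_1 \mapsto x_1^{2b+1}x_2$, $y_2 \mapsto x_1^{b+1}x_2^2$, $y_3 \mapsto x_1 x_2^3$, and $y_4 \mapsto x_2^{3b+1}$. A direct substitution verifies that each of the six listed binomials lies in $\ker \varphib$; in fact they are precisely the six $2\times 2$ minors of
$$M = \begin{pmatrix} y_0 & y_1 & y_2 & y_3^b \\ y_1 & y_2 & y_3 & y_4 \end{pmatrix},$$
which foreshadows the Eagon--Northcott structure of the resolution. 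Denoting the proposed ideal by $I$, this gives $I \subseteq \ker \varphib$.

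For the reverse containment, equip $R$ with the reverse lexicographic order with $y_0 > y_1 > \cdots > y_4$. I claim the six generators form a Gröbner basis of $I$ with leading terms
$$y_1^2,\quad y_1 y_2,\quad y_2^2,\quad y_3^{b+1},\quad y_2 y_3^b,\quad y_1 y_3^b.$$
Most pairs of leading terms are coprime; for the remaining $S$-polynomials a short Buchberger check reduces each to zero using the six generators themselves. The associated set of standard monomials is $\{\, y_0^a \cdot m \cdot y_4^e : a, e \geq 0,\; m \in \Lambda \,\}$, where
$$\Lambda = \{\, y_3^d : 0 \leq d \leq b \,\} \cup \{\, y_1 y_3^d,\; y_2 y_3^d : 0 \leq d \leq b-1 \,\},$$
a set of cardinality $3b+1 = p$.

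Since $\ker \varphib$ is a toric ideal, it is spanned over $\CC$ by pure-difference binomials $y^v - y^w$ with $\varphib(y^v)=\varphib(y^w)$, and reduction by the Gröbner basis preserves this binomial form. It therefore suffices, as in the proof of Proposition \ref{4 INV KERNEL}, to show that any such binomial $\bar f = y_0^{v_0} m_v y_4^{v_4} - y_0^{w_0} m_w y_4^{w_4}$ with $m_v, m_w \in \Lambda$ must vanish. The crux is the bijection $\Lambda \xrightarrow{\sim} \{0, 1, \dots, p-1\}$ given by $m \mapsto \log_{x_1}(\varphib(m))$: the three sub-families $\{y_3^d\}$, $\{y_2 y_3^d\}$, $\{y_1 y_3^d\}$ have images $\{0,\dots,b\}$, $\{b+1,\dots,2b\}$, $\{2b+1,\dots,3b\}$ respectively, partitioning $\{0,\dots,p-1\}$. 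Matching $\log_{x_1}$-degrees on both sides of $\bar f$ and reducing modulo $p$ forces $m_v = m_w$ and hence $v_0 = w_0$; matching $\log_{x_2}$-degrees then forces $v_4 = w_4$. Thus $\bar f = 0$ and $f \in I$. I expect the main obstacle to be the bookkeeping in the Buchberger check combined with carefully verifying the three-way disjointness in the bijection above; once these are in place the argument parallels Proposition \ref{4 INV KERNEL} quite directly.
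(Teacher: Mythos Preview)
Your proposal is correct and follows essentially the same route as the paper: invoke Lemma \ref{p-1/3} and Proposition \ref{2p+1 inv} to pin down the invariants, verify $I \subseteq \ker\varphib$, then use the reverse-lexicographic normal form to reduce to showing that $m \mapsto \log_{x_1}(\varphib(m))$ gives a bijection $\Lambda \to \{0,\dots,p-1\}$, with the same three-interval partition $\{0,\dots,b\}\cup\{b+1,\dots,2b\}\cup\{2b+1,\dots,3b\}$. One small remark: the Buchberger check you propose is not actually required---since the six leading terms automatically lie in $\text{in}(I)$, the normal form with respect to any Gr\"obner basis of $I$ already avoids them, which is all the argument needs; the paper proceeds exactly this way without claiming the six generators themselves form a Gr\"obner basis.
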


\begin{proof}
    For shorthand, let 
    $$I = \langle y_2^2 - y_1y_3, y_1y_2-y_0y_3, y_3^{b+1}-y_2y_4, y_1^2-y_0y_2, y_2y_3^b-y_1y_4, y_1y_3^b - y_0y_4 \rangle.$$
    By Lemma \ref{p-1/3}, $b = \frac{p-1}{3}$ and $b\inv = p-3$.
    Proposition \ref{2p+1 inv} gives us
    $$\invab = \{ x_1^p, x_1^{p-b}x_2, x_1^{p-2b}x_2^2, x_1x_2^{p-b\inv}, x_2^p\} = 
    \{ x_1^p, x_1^{\frac{2p+1}{3}}x_2, x_1^{\frac{p+2}{3}}x_2^2, x_1x_2^{3}, x_2^p \}$$
    Therefore $\ker \varphib$ is generated by
    $$\left\{ y_0^{v_0}y_1^{v_1}y_2^{v_2}y_3^{v_3}y_4^{v_4} - y_0^{w_0}y_1^{w_1}y_2^{w_2}y_3^{w_3}y_4^{w_4}
    \Bigg|
    \renewcommand*{\arraystretch}{0.9}
    \begin{pmatrix}
        v_0 \\ v_1 \\ v_2 \\ v_3 \\ v_4
    \end{pmatrix}, \begin{pmatrix}
        w_0 \\ w_1 \\ w_2 \\ w_3 \\ w_4
    \end{pmatrix} \in \mathbb{N}^5, \;
    A \renewcommand*{\arraystretch}{0.9}
    \begin{pmatrix}
        v_0 \\ v_1 \\ v_2 \\ v_3 \\ v_4
    \end{pmatrix} = A \begin{pmatrix}
        w_0 \\ w_1 \\ w_2 \\ w_3 \\ w_4
    \end{pmatrix}
    \right\}$$
    where $A = \renewcommand*{\arraystretch}{1.2}
    \begin{pmatrix}
        p & \frac{2p+1}{3} & \frac{p+2}{3} & 1 & 0
        \\
        0 & 1 & 2 & 3 & p
    \end{pmatrix}$. 
    Hence $y_2^2 - y_1y_3 \in \ker \varphib$ since
    \begin{align*}
        \renewcommand*{\arraystretch}{1.2}
        \begin{pmatrix}
            p & \frac{2p+1}{3} & \frac{p+2}{3} & 1 & 0
            \\
            0 & 1 & 2 & 3 & p
        \end{pmatrix}
        \renewcommand*{\arraystretch}{0.9}
        \begin{pmatrix}
            0 \\ 0 \\ 2 \\ 0 \\ 0
        \end{pmatrix}
        =
        \renewcommand*{\arraystretch}{1.2}
        \begin{pmatrix}
            \frac{2p+4}{3} \\ 4
        \end{pmatrix}
        =
        \begin{pmatrix}
            p & \frac{2p+1}{3} & \frac{p+2}{3} & 1 & 0
            \\
            0 & 1 & 2 & 3 & p
        \end{pmatrix}
        \renewcommand*{\arraystretch}{0.9}
        \begin{pmatrix}
            0 \\ 1 \\ 0 \\ 1 \\ 0
        \end{pmatrix}.
    \end{align*}
    Similar computation proves $I \subseteq \ker \varphib$.

    As in the proof of Proposition \ref{4 INV KERNEL}, by way of contradiction suppose there exists some $f \in \ker \varphib \setminus I$. 
    Let $G$ be the Gröbner basis of $I$ with respect to reverse lexicographic order with $y_i > y_{i+1}$ for $0 \leq i \leq 3$, and let $\overline{f}$ be the reduction of $f$ by $G$. 
    Therefore no monomial of $\overline{f}$ is divisible by any of the terms in
    $$\{y_2^2, y_1y_2, y_3^{b+1}, y_1^2, y_2y_3^b, y_1y_3^b \}.$$
    Hence
    $$\overline{f} = y_0^{v_0}y_1^{v_1}y_2^{v_2}y_3^{v_3}y_4^{v_4} - y_0^{w_0}y_1^{w_1}y_2^{w_2}y_3^{w_3}y_4^{w_4}$$
    such that $y_1^{v_1}y_2^{v_2}y_3^{v_3}, y_1^{w_1}y_2^{w_2}y_3^{w_3} \in \Lambda$ where $\Lambda$ is given by
    $$\Lambda = \{y_3^r, y_2y_3^s, y_1y_3^s \mid 0 \leq r \leq b, 0 \leq s \leq b-1 \}.$$ 
    From here, it suffices to show there is a bijection between $\Lambda$ and $\{0,1,\dots,p-1\}$.
    Consider
    \begin{align*}
        \{ \log_{x_1} (\varphib(y_3^r)) \mid 0 \leq r \leq b\}
        &=
        \{r \mid 0 \leq r \leq b\}
        \\
        &=
        \{0,1,\dots, p-2b-1\} \quad \small{\text{since $b= \frac{p-1}{3} = p-2b-1$}}
        \\
        \{ \log_{x_1} (\varphib(y_2y_3^s)) \mid 0 \leq s \leq b-1 \}
        &=
        \{ (p-2b)+s \mid 0 \leq s \leq b-1 \}
        \\
        &=
        \{p-2b, p-2b+1,\dots, p-b-1\}
        \\
        \{ \log_{x_1} (\varphib(y_1y_3^s)) \mid 0 \leq s \leq b-1\}
        &=
        \{ (p-b) + s \mid 0 \leq s \leq b-1\}
        \\
        &=
        \{ p-b, p-b+1, \dots, p-1\}.
    \end{align*}
    This provides the bijection between $\Lambda$ and $\{0,1,\dots,p-1\}$ by mapping $m \in \Lambda$ to $\log_{x_1}(\varphib (m))$.
    It follows that $\ker \varphib = I$.
\end{proof}

\begin{prop}\label{2p+1 upper ker}
    If $b > \frac{p-1}{2}$ and $(p-b)(p-b\inv)= 2p+1$ then
    $$\ker \varphib = \langle y_2^2 - y_1y_3, y_1^{\alpha -1}y_2 -y_0y_3, y_3^{\beta}-y_2y_4, y_1^{\alpha}-y_0y_2, y_2y_3^{\beta-1}-y_1y_4, y_1^{\alpha-1}y_3^{\beta-1}-y_0y_4 \rangle$$
    where $\alpha := \frac{p-b\inv+1}{2}$ and $\beta := \frac{p-b+1}{2}$.
\end{prop}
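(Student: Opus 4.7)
The plan is to follow the template of Proposition \ref{2p+1 lower ker} verbatim, using the generating set supplied by Proposition \ref{2p+1 inv} in this range of $b$. There we have
$$\invab = \{x_1^p,\, x_1^{p-b}x_2,\, x_1^{\beta}x_2^{\alpha},\, x_1 x_2^{p-b\inv},\, x_2^p\},$$
with multidegree matrix $A = \begin{pmatrix} p & p-b & \beta & 1 & 0 \\ 0 & 1 & \alpha & p-b\inv & p \end{pmatrix}$. The inclusion $I \subseteq \ker \varphib$ will follow from six short verifications of $Av = Aw$, each reducing to an arithmetic rearrangement of $(p-b)(p-b\inv) = 2p+1$ together with the defining relations $2\alpha = p - b\inv + 1$ and $2\beta = p - b + 1$; for instance, $y_1^{\alpha-1}y_2 - y_0 y_3 \in \ker \varphib$ reduces to the identity $(p-b)(\alpha - 1) + \beta = p + 1$.

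For the reverse inclusion, I would argue by contradiction: suppose $f \in \ker\varphib \setminus I$, and let $\overline{f}$ be the reduction of $f$ by a Gröbner basis $G$ of $I$ with respect to the reverse lex order $y_0 > y_1 > \cdots > y_4$. Since $G \subseteq \ker \varphib$, also $\overline{f} \in \ker \varphib$, and by Theorem 65.5 of \cite{peeva} we may take
$$\overline{f} = y_0^{v_0}y_1^{v_1}y_2^{v_2}y_3^{v_3}y_4^{v_4} - y_0^{w_0}y_1^{w_1}y_2^{w_2}y_3^{w_3}y_4^{w_4},$$
whose two monomials are divisible by none of the leading terms $\{y_2^2,\, y_1^{\alpha-1}y_2,\, y_3^\beta,\, y_1^\alpha,\, y_2 y_3^{\beta-1},\, y_1^{\alpha-1} y_3^{\beta-1}\}$. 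A case analysis on $v_2 \in \{0, 1\}$ places $y_1^{v_1}y_2^{v_2}y_3^{v_3}$ and $y_1^{w_1}y_2^{w_2}y_3^{w_3}$ into
$$\Lambda := \bigl\{y_1^i y_3^j : 0 \leq i \leq \alpha-1,\, 0 \leq j \leq \beta-1,\, (i,j) \neq (\alpha-1, \beta-1)\bigr\} \cup \bigl\{y_1^i y_2 y_3^j : 0 \leq i \leq \alpha-2,\, 0 \leq j \leq \beta-2\bigr\}.$$

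The main step, and the one I expect to be the principal obstacle, is to verify that $m \mapsto \log_{x_1}(\varphib(m))$ defines a bijection $\Lambda \to \{0, 1, \dots, p-1\}$. On the first piece $y_1^i y_3^j \mapsto i(p-b) + j$, and on the second $y_1^i y_2 y_3^j \mapsto i(p-b) + \beta + j$. For each fixed $0 \leq i \leq \alpha - 2$ the two blocks $[i(p-b),\, i(p-b)+\beta-1]$ and $[i(p-b)+\beta,\, i(p-b)+2\beta-2]$ glue into $[i(p-b),\, (i+1)(p-b)-1]$ because $2\beta - 1 = p - b$, and the remaining $\beta - 1$ monomials $y_1^{\alpha-1}y_3^j$ with $0 \leq j \leq \beta - 2$ fill $[(\alpha-1)(p-b),\, p-1]$, using $(\alpha-1)(p-b) + \beta - 2 = p-1$ (a direct consequence of $(p-b)(p-b\inv) = 2p+1$). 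A cardinality check $|\Lambda| = 2\alpha\beta - \alpha - \beta = p$ confirms these intervals tile $\{0,\ldots,p-1\}$ disjointly. Once the bijection is established, matching $\log_{x_1}$-values of the two monomials of $\overline{f}$ modulo $p$ forces $(v_0, v_1, v_2, v_3) = (w_0, w_1, w_2, w_3)$, then matching $\log_{x_2}$-values forces $v_4 = w_4$, so $\overline{f} = 0$, contradicting $f \notin I$.
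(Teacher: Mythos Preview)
Your proposal is correct and follows the paper's proof essentially verbatim: same generating set from Proposition~\ref{2p+1 inv}, same Gr\"obner-basis reduction with the same six leading terms, the same set $\Lambda$ (the paper writes it as $\{y_1^r y_3^s, y_1^r y_2 y_3^t, y_1^{\alpha-1}y_3^t \mid 0 \leq r \leq \alpha-2,\ 0 \leq s \leq \beta-1,\ 0 \leq t \leq \beta-2\}$, which is just your description regrouped), and the same interval-tiling argument via $\log_{x_1}\circ\varphib$ using $2\beta-1=p-b$ and $(p-b)(\alpha-1)+(\beta-2)=p-1$. Your added cardinality check $|\Lambda|=2\alpha\beta-\alpha-\beta=p$ is a nice sanity check the paper omits.
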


\begin{proof}
    For shorthand, let
    $$I = \langle y_2^2 - y_1y_3, y_1^{\alpha -1}y_2 -y_0y_3, y_3^{\beta}-y_2y_4, y_1^{\alpha}-y_0y_2, y_2y_3^{\beta-1}-y_1y_4, y_1^{\alpha-1}y_3^{\beta-1}-y_0y_4 \rangle.$$
    By Proposition \ref{2p+1 inv},
    $$\invab = \{ x_1^p, x_1^{p-b}x_2, x_1^{\beta}x_2^{\alpha} , x_1x_2^{p-b\inv}, x_2^p\}$$ and thus $\ker \varphib$ is generated by 
    $$\left\{ y_0^{v_0}y_1^{v_1}y_2^{v_2}y_3^{v_3}y_4^{v_4} - y_0^{w_0}y_1^{w_1}y_2^{w_2}y_3^{w_3}y_4^{w_4}
    \Bigg|
    \renewcommand*{\arraystretch}{0.9}
    \begin{pmatrix}
        v_0 \\ v_1 \\ v_2 \\ v_3 \\ v_4
    \end{pmatrix}, \begin{pmatrix}
        w_0 \\ w_1 \\ w_2 \\ w_3 \\ w_4
    \end{pmatrix} \in \mathbb{N}^5, \;
    A \renewcommand*{\arraystretch}{0.9}
    \begin{pmatrix}
        v_0 \\ v_1 \\ v_2 \\ v_3 \\ v_4
    \end{pmatrix} = A \begin{pmatrix}
        w_0 \\ w_1 \\ w_2 \\ w_3 \\ w_4
    \end{pmatrix}
    \right\}$$
    where $A = \renewcommand*{\arraystretch}{1.2}
    \begin{pmatrix}
        p & p-b & \beta & 1 & 0
        \\
        0 & 1 & \alpha & p-b\inv & p
    \end{pmatrix}$. 
    It follows that $I \subseteq \ker \varphib$.

    As in the the proof of Propositions \ref{4 INV KERNEL} and \ref{2p+1 lower ker}, let $f \in \ker \varphib \setminus I$, $G$ be the Gröbner basis of $I$ with respect to reverse lexicographic order with $y_i > y_{i+1}$ for $0 \leq i \leq 3$, and $\overline{f}$ be the reduction of $f$ by $G$. Thus
    $$\overline{f} = y_0^{v_0}y_1^{v_1}y_2^{v_2}y_3^{v_3}y_4^{v_4} - y_0^{w_0}y_1^{w_1}y_2^{w_2}y_3^{w_3}y_4^{w_4}$$ 
    such that $y_1^{v_1}y_2^{v_2}y_3^{v_3}, y_1^{w_1}y_2^{w_2}y_3^{w_3} \in \Lambda$ where
    \[
    \Lambda = 
    \left\{ y_1^r y_3^s, y_1^r y_2 y_3^t, y_1^{\alpha-1}y_3^t \mid 
    0 \leq r \leq \alpha-2, 
    0 \leq s \leq \beta-1, 0 \leq t \leq \beta-2
    \right\}.
    \]
    We again prove there is a bijection between $\Lambda$ and $\{0,1,\dots,p-1\}$.
    First fix some $0 \leq r \leq \alpha -2$ and consider
    \begin{align*}
        \{ \log_{x_1}(\varphib(y_1^ry_3^s)) \mid 0 \leq s \leq \beta-1\} 
        &= \{(p-b)r+s \mid 0 \leq s \leq \beta-1\}
        \\
        &= \{ (p-b)r, (p-b)r+1, \cdots, (p-b)r + (\beta-1)\}
    \end{align*}
    \begin{align*}
        \{ \log_{x_1}(\varphib(y_1^r y_2y_3^t)) \mid 0 \leq t \leq &\beta -2 \}
        = \{(p-b)r + \beta + t \mid 0 \leq t \leq \beta - 2\}
        \\
        &= \{(p-b)r + \beta,  (p-b)r + \beta +1, \dots, (p-b)r + 2\beta-2\}
        \\ 
        \text{(since $2\beta-2 = p-b-1$)} \quad
        &= \{(p-b)r + \beta,  (p-b)r + \beta +1, \dots, (p-b)r + (p-b-1)\}.
    \end{align*}
    Therefore 
    \begin{align*}
        \bigcup_{r=0}^{\alpha-2} &\{\log_{x_1}(\varphib(y_1^ry_3^s)), \log_{x_1}(\varphib(y_1^ry_2y_3^t)) \mid 0 \leq s \leq \beta-1, 0 \leq t \leq \beta-2\}
        \\
        &= \bigcup_{r=0}^{\alpha-2} \{(p-b)r, (p-b)r + 1, \dots, (p-b)r + (p-b-1) \}
        \\
        &= \{ 0, 1, \dots, (p-b)(\alpha-2)+(p-b-1)\}
        \\
        &=
        \{0,1, \dots, (p-b)(\alpha-1)-1\}.
    \end{align*}
    Now observe
    \begin{align*}
        \{ \log_{x_1} (\varphib(y_1^{\alpha-1}y_3^t)) &\mid 0 \leq t \leq \beta-2\}
        =
        \{ (p-b)(\alpha-1) + t \mid 0 \leq t \leq \beta-2\}
        \\
        &=
        \{ (p-b)(\alpha-1), (p-b)(\alpha-1)+1, \dots, (p-b)(\alpha-1)+(\beta-2)\}
        \\
        &=
        \{ (p-b)(\alpha-1), (p-b)(\alpha-1)+1, \dots, p-1\}
    \end{align*}
    where $(p-b)(\alpha-1) + (\beta-2) = p-1$ by the following computation:
    \begin{align*}
        (p-b)(\alpha-1)+(\beta-2)
        &=
        (p-b)\left( \frac{p-b\inv-1}{2}\right) + \left(\frac{p-b-3}{2}\right)
        \\
        &=
        \frac{(p-b)(p-b\inv)-(p-b) + (p-b-3)}{2}
        \\
        &=
        \frac{(2p+1)-(p-b)+(p-b-3)}{2}
        \\
        &=
        \frac{2p-2}{2}
        \\
        &=
        p-1.
    \end{align*}
    Hence there is a bjiection between $\Lambda$ and $\{0,1,\dots,p-1\}$ by mapping $m \in \Lambda$ to $\log_{x_1}(\varphib(m))$. We conclude $\ker \varphib = I$.
\end{proof}

\begin{rem}
    As a result of Propositions \ref{2p+1 lower ker} and \ref{2p+1 upper ker}, we know that if $(p-b)(p-b\inv) = 2p+1$ then $\ker \varphib$ is generated by the maximal minors of
    $$\begin{bmatrix}
        y_0 & y_1 & y_2 & y_3^{b}
        \\
        y_1 & y_2 & y_3 & y_4
    \end{bmatrix}$$
    in the case $b < \frac{p-1}{2}$,
    or by the maximal minors of
    $$\begin{bmatrix}
        y_0 & y_1 & y_2^{\alpha-1} & y_3^{\beta-1}
        \\
        y_1 & y_2 & y_3 & y_4
    \end{bmatrix}$$
    where $\alpha = \frac{p-b\inv+1}{2}$ and $\beta = \frac{p-b+1}{2}$ when $b > \frac{p-1}{2}$.
    We also saw in Proposition \ref{2p+1 inv}, if $(p-b)(p-b\inv) = 2p+1$ then $\sg$ is generated by 5 invariants and so $\dim R = 5$. Therefore $\text{depth} \ker \varphib = \text{codim} \ker \varphib  = \dim R - \dim (R /\ker \varphib) = 5 - 2 = 3$. 
    So as $\text{depth} \ker \varphib = 4-2+1$ where $\ker \varphib$ is generated by the maximal minors of either of the mentioned $2 \times 4$ matrix, then by Corollary A2.60 of \cite{eisenbudgeom}, the free resolution of $\sg$ is the Eagon-Northcott complex of the respective matrix.
\end{rem}

\medskip

\section{Case: 2 slopes}\label{section6}

We now shift from cases that observe $S^G_{1,b}$ for a small amount of generating invariants to, what we consider, the 2 slope case.
This studies when the exponent vectors of $\invab$ lie on the union of two lines.

\begin{definition}
    With our usual setup where $\invab = \{\x^{c_0}\xx^{d_0}, \x^{c_1} \xx^{d_1}, \dots, \x^{c_n} \xx^{d_n} \}$ is arranged in lexicographic order, set
    \begin{align*}
        sl_{\mathit{1,b}}
        = 
        \left \{
        \displaystyle \frac{d_{i+1} - d_i}{c_{i+1} - c_i} \; \Big| \; 0 \leq i \leq n-1
        \right\}
    \end{align*}
    to be the collection of slopes of consecutive exponent vectors of $\invab$.
    We say $\invab$ has \textit{2 slopes} if $|\text{sl}_{1,b}| = 2$.
\end{definition}

\begin{theorem}\label{2 SLOPES}
    Let $0 < b \leq b\inv < p$ and express $p$ as
    \begin{align*}
        p &= bq+r
        \\
        p &= b\inv s +t
    \end{align*}
    via the division algorithm.
    Then $\invab$ has 2 slopes if and only if $r=s$ and $q=t$.
\end{theorem}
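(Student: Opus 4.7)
The approach is to interpret generators as lattice points in $\ZZ_{\geq 0}^2$ and exploit the two natural lines $L_1\colon c + bd = p$ and $L_2\colon b\inv c + d = p$ guaranteed by Lemma \ref{inv-divalg}: the first set lies on $L_1$ with consecutive edges of slope $-1/b$, and the second set lies on $L_2$ with consecutive edges of slope $-b\inv$. The condition $r = s,\ q = t$ is precisely the statement that these two lines meet at the common lattice point $(r,q) = (s,t)$, making the ``2 slopes'' question geometric: when does $\invab$ fit cleanly on $L_1 \cup L_2$, meeting at one shared vertex?

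For $(\Rightarrow)$, assume $|\text{sl}_{1,b}| = 2$. The first and last consecutive slopes in lex order are always $-1/b$ (between $(p,0)$ and $(p-b,1)$) and $-b\inv$ (between $(1,p-b\inv)$ and $(0,p)$); that these pairs are in fact consecutive follows from Lemma \ref{invariant-lemmas}(2). These two slopes coincide only when $b = 1$, in which case Proposition \ref{b=1} gives $|\text{sl}_{1,1}| = 1$, so we may assume $b \geq 2$ with the two allowed slopes equal to $\{-1/b,\ -b\inv\}$. Lemma \ref{inv-divalg} then places the entire first set in $\invab$ with consecutive increments $\Delta c = -b,\ \Delta d = 1$, consuming $q$ steps of slope $-1/b$. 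Once the path arrives at $(r,q)$, the bound $r < b$ makes any further slope-$(-1/b)$ step push $c < 0$, so every remaining step has slope $-b\inv$. The cumulative displacement from $(r,q)$ to $(0,p)$ is $\Delta c = -r,\ \Delta d = p - q$; since each remaining step has slope $-b\inv$, this forces $p - q = b\inv r$, equivalently $(r,q) \in L_2$. Integer points of $L_2$ in $\ZZ_{\geq 0}^2$ have $c$-coordinate at most $s$, so $r \leq s$; combined with Lemma \ref{inv-intersection} we conclude $r = s$, and then $q = p - b\inv r = t$.

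For $(\Leftarrow)$, assume $r = s,\ q = t$. The stronger claim I would prove is that $\invab$ equals the union of the two sets from Lemma \ref{inv-divalg}; this suffices because within each set the consecutive slopes are $-1/b$ and $-b\inv$ respectively, and the shared vertex $(r,q) = (s,t)$ joins them with no extra edge. The containment $\supseteq$ is Lemma \ref{inv-divalg}. For $\subseteq$, let $(c,d)$ be a minimal generator with $c,d \in [0,p]$ (Lemma \ref{invariant-lemmas}(1)). The $c$-values appearing in the union are exactly $\{0,1,\ldots,r\} \cup \{r, r+b, \ldots, p\}$, and for each such $c$ the unique invariant in $[0,p]^2$ with that $c$-coordinate is the corresponding first- or second-set element. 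If $c$ lies outside this set, write $c = r + jb + i$ with $0 \leq j \leq q - 1$ and $1 \leq i \leq b - 1$; then $c \not\equiv r \pmod{b}$ rules out $(c,d) \in L_1$, so $c + bd = kp$ with $k \geq 2$, which forces $bd \geq 2p - c > p$ and hence $d > q$. Combined with $c > r$, the decomposition $(c,d) = (r,q) + (c-r,\, d-q)$ writes $(c,d)$ as a nontrivial sum of two invariants, contradicting minimality.

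The main technical obstacle is the gap-case decomposition in $(\Leftarrow)$: the cleanliness of $(c,d) = (r,q) + (c-r, d-q)$ hinges on the strict inequality $d > q$, which in turn requires $k \geq 2$ in $c + bd = kp$, and this uses precisely that $c$ lies in an off-$L_1$ residue class mod $b$. The forward direction is less delicate once one sees that the bound $r < b$ forces the post-$(r,q)$ portion to consist entirely of slope-$(-b\inv)$ steps, reducing the endpoint argument to a single total-displacement equality.
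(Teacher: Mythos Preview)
Your proof is correct, and it follows the same overall architecture as the paper: both directions hinge on showing that $\invab$ coincides with the union of the two families from Lemma~\ref{inv-divalg} precisely when those families share the vertex $(r,q)=(s,t)$, and that the two slopes in play are $-1/b$ and $-b^{-1}$.

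The execution differs in two places worth noting. For $(\Rightarrow)$, the paper simply asserts (via the picture) that if $(r,q)\neq(s,t)$ then a third slope appears; you instead give an honest argument---walking the lex-ordered generators, observing that the bound $r<b$ forbids any further $-1/b$ step past $(r,q)$, and then reading off $p=b^{-1}r+q$ from the total displacement, which together with the inequality $r\ge s$ hidden in the proof of Lemma~\ref{inv-intersection} pins down $r=s$, $q=t$. This is more rigorous than what the paper writes. For $(\Leftarrow)$, the paper splits on whether $d\le q$ or $d>q$ and invokes the characterization inside the proof of Lemma~\ref{inv-divalg} directly; you instead split on the $c$-coordinate and, for $c$ in a gap, produce the explicit factorization $(c,d)=(r,q)+(c-r,d-q)$ to contradict minimality. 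Both routes work; the paper's is shorter because it recycles Lemma~\ref{inv-divalg}'s proof verbatim, while yours is self-contained but has to redo the $k\ge 2$ bookkeeping. One small wording issue: when you say ``the unique invariant in $[0,p]^2$ with that $c$-coordinate,'' what you really use is uniqueness of \emph{generators} with a given $c$-value, which is Lemma~\ref{invariant-lemmas}(4); the phrasing as stated is not quite literally true at $c=0$ or $c=p$.
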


\begin{proof}
    Suppose $r=s$ and $q=t$. By Lemma \ref{inv-divalg}, $\invab$ contains

    \medskip
    
    $\{ \x^{p-kb}\xx^k \mid 0 \leq k \leq q \} \cup \{ \x^{m}\xx^{p-mb\inv} \mid 0 \leq m \leq s \} =$
    $$\{ x_1^p, x_1^{p-b}x_2, \dots, x_1^{p-(q-1)b}x_2^{q-1}, x_1^{p-qb}x_2^q = x_1^sx_2^{p-sb\inv}, x_1^{s-1}x_2^{p-(s-1)b\inv}, \dots, x_1x_2^{p-b\inv}, x_2^p\}.$$
    We will prove this is precisely the generating set. 
    Suppose $x_1^cx_2^d \in \invab$. 
    If $d \leq q$ then $x_1^c x_2^d = x_1^{p-db}x_2^d$. 
    If $d > q$ then $c < p-qb$ must hold. 
    Since $p-qb=r=s$ then $c < s$. Hence $x_1^c x_2^d = x_1^c x_2^{p-cb\inv}$. This proves the other containment. 
    Now plotting the invariants gives:
    \begin{center}
        \includegraphics[scale=0.22]{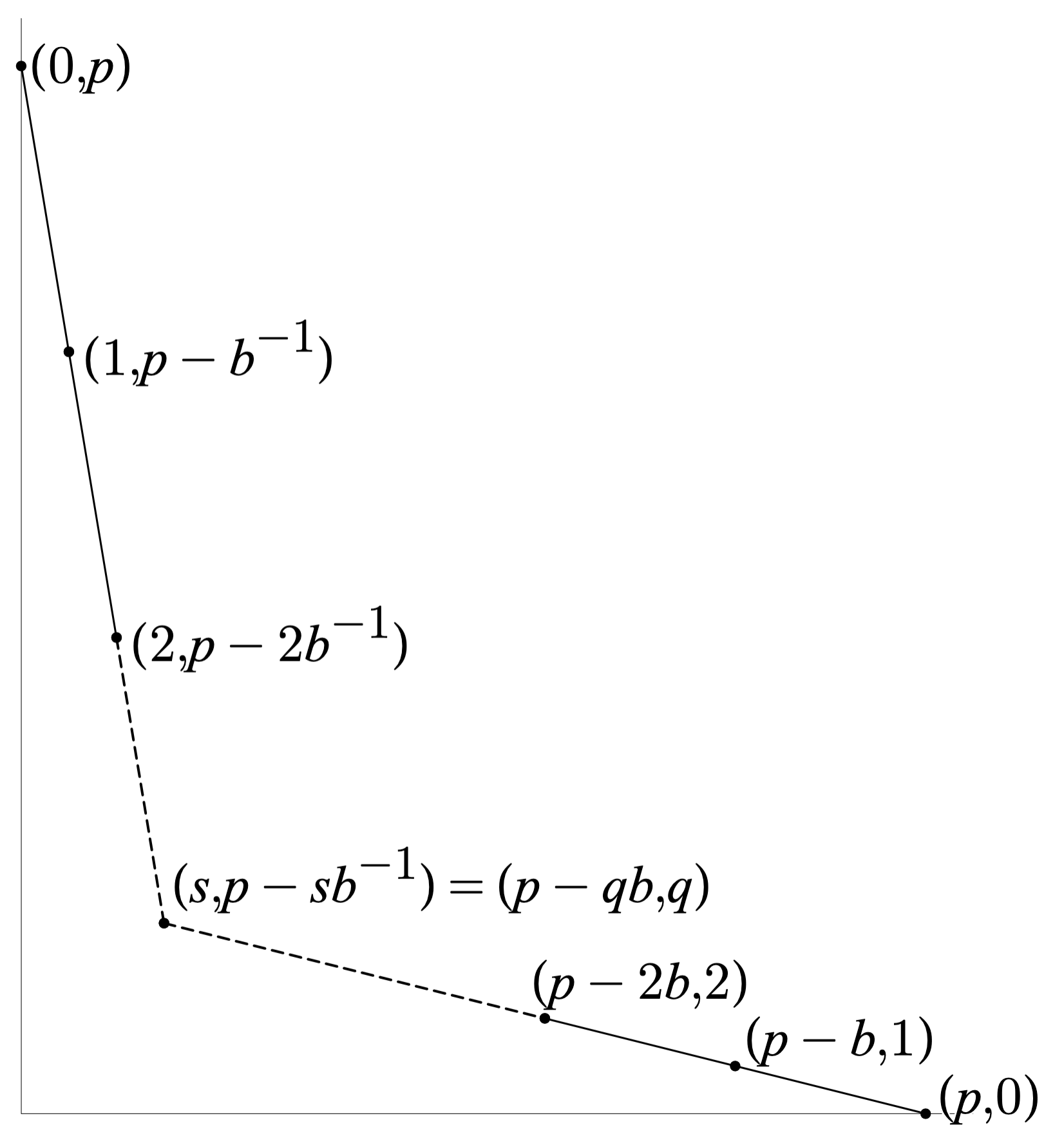}
    \end{center}
    where we can see $\text{sl}_{1,b} = \{-b\inv, -\frac{1}{b}\}$. Thus $\invab$ has 2 slopes.

    Now suppose $\invab$ has 2 slopes.
    We note that $b \neq 1$ since $\text{sl}_{1,1} = \{-1\}$ by Proposition \ref{b=1}.
    So $-b\inv$ and $-\frac{1}{b}$ are distinct, and also belong to $\text{sl}_{1,b}$ since $-b\inv$ corresponds to the line $y-p = -b\inv x$ that $(0,p), (1,p-b\inv), \dots, (s,t)$ lie on, and $-\frac{1}{b}$ corresponds to the line $y= -\frac{1}{b} (x-p)$ that $(r,q), \dots, (p-b,1), (p,0)$ lie on, as depicted in the following graph.
    \begin{center}
        \includegraphics[scale=0.25]{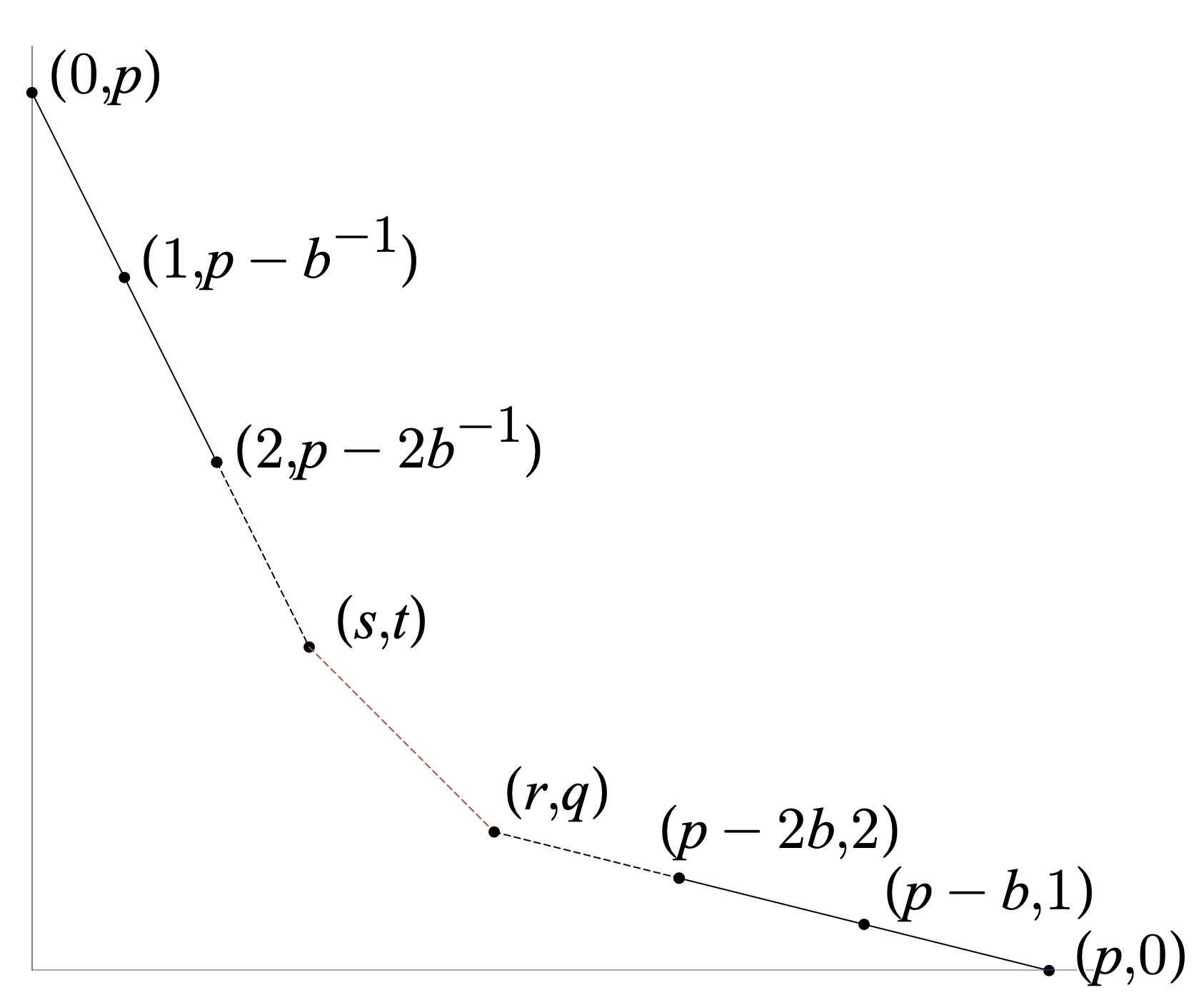}
    \end{center}
    Therefore $\text{sl}_{1,b} = \{-b\inv, -\frac{1}{b}\}$.
    Hence $(s,t)= (r,q)$ must hold otherwise $|\text{sl}_{1,b}| \geq 3$.
\end{proof}

\begin{rem}
    Recall Example \ref{ex3} where $p = 13, b = 4$, and $b\inv = 10$.
    Thus $p =bq+r = b\inv s + t$ for $(r,q) = (1,3) = (s,t)$. The invariants indeed live on precisely two lines.
\end{rem}

\begin{rem}
    For any $p$, Theorem \ref{2 SLOPES} is always satisfied by $b=2, \frac{p-1}{2}, p-1$, with respective inverses $b\inv = \frac{p+1}{2}, p-2, p-1$. Furthermore, if $b$ satisfies Theorem \ref{thm2} then it likewise satisfies Theorem \ref{2 SLOPES} since its generating invariants are $\{\x^p, \x^{p-b} \xx, \x \xx^{p-b\inv}, \xx^p\}$.
\end{rem}

\begin{rem}
    If $\frac{p-1}{2} < b < p-1$ then it cannot satisfy Theorem \ref{2 SLOPES}. This is because $p = bq+r = b\inv s+t$ for $(r,q) = (r,1)$ and $(s,t) = (1,t)$ where $r > 1$. 
\end{rem}

In contrast to the maximal minor presentation of $\ker \varphib$ observed in $\S$\ref{section4} and $\S$\ref{section5}, note that this can fail to hold in the 2 slopes case.
The following example demonstrates this.

\begin{example}
    Let $G = \ZZ/11\ZZ$ and $b = 3$. 
    Thus $b\inv = 4$. 
    So $p = bq+r = b\inv s+t$ for $(r,q) = (2,3) = (s,t)$.
    This satisfies Theorem \ref{2 SLOPES}.
    However,
    \begin{align*}
        \ker \varphi_{1,3}
        = \langle 
        &y_2^2 - y_1y_3, 
        y_3^3 - y_2y_4,
        y_4^2 - y_3y_5,
        y_1y_2 - y_0y_3,
        y_2 y_3^2 - y_1y_4,
        \\
        &\;
        y_3^2 y_4 - y_2y_5,
        y_1^2 - y_0y_2,
        y_1 y_3^2 - y_0y_4,
        y_2y_3y_4 - y_1y_5,
        y_1y_3y_4 - y_0y_5
        \rangle,
    \end{align*}
    one can check that the free resolution of $S^G_{1,3}$ is not given by an Eagon-Northcott complex.
\end{example}

\bigskip

\bibliographystyle{alpha}
\bibliography{references}

\end{document}